\newcommand{\dint}{\displaystyle\int}
\theoremstyle{plain}
\newtheorem{theorem}{Theorem}
\newtheorem{lemma}[theorem]{Lemma}
\newtheorem{corollary}[theorem]{Corollary}
\theoremstyle{definition}
\newcommand{\RR}{{\mathbb R}}
\newcommand{\NN}{{\mathbb N}}
\newcommand{\cD}{{\mathcal D}}
\renewcommand{\Tilde}{\widetilde}
\DeclareMathOperator{\spec}{spec}
\DeclareMathOperator{\supp}{supp}
\newcommand{\dd}{\,\mathrm{d}}
\begin{document}
	\title[]{Peculiar behavior of the principal Laplacian eigenvalue for large negative Robin parameters}

	\author[C. Dietze]{Charlotte Dietze}
	\address{Ludwig-Maximilians-Universit\"at M\"unchen\\Fakult\"at f\"ur Mathematik, Informatik und Statistik\\ Mathematisches Institut\\ Theresienstr.~39\\ 80333 M\"unchen\\ Germany \&
	 Institut des Hautes \'Etudes Scientifiques, 35 Route de Chartres, 91440 Bures-sur-Yvette, France}
	\email{dietze@mathematik.uni-muenchen.de}
	\urladdr{https://www.mathematik.uni-muenchen.de/~dietze/}
	
	\author[K. Pankrashkin]{Konstantin Pankrashkin}
	\address{Carl von Ossietzky Universit\"at Oldenburg\\Fakult\"at V -- Mathematik und Naturwissenschaften\\ Institut f\"ur Mathematik\\ Ammer\-l\"an\-der Heerstr.~114--118\\ 26129 Oldenburg\\ Germany}	\email{konstantin.pankrashkin@uol.de}
	\urladdr{http://uol.de/pankrashkin/}

	%
	%
	%
	%
	%
	\keywords{Laplacian, Robin boundary condition, eigenvalue, Lipschitz boundary}
	\subjclass[2010]{Primary 35P15; Secondary 49R05, 35J05} 
	
	\begin{abstract}
		Let $\Omega\subset\mathbb{R}^n$ with $n\ge 2$ be a bounded Lipschitz domain with  outer unit normal $\nu$.
		For $\alpha\in\mathbb{R}$ let $R_\Omega^\alpha$ be the Laplacian in $\Omega$ with the Robin boundary condition $\partial_\nu u+\alpha u=0$, and denote by $E(R^\alpha_\Omega)$ its principal eigenvalue.	In 2017 Bucur, Freitas and Kennedy stated the following open question:
		\emph{Does the limit of the ratio $E(R_\Omega^\alpha)/ \alpha^2$ for $\alpha\to-\infty$ always
			exist?} We give a negative answer.
	\end{abstract}
	
	\maketitle

	\section{Introduction}
	
	Let $n\ge 2$ and $\Omega\subset\RR^n$ be a domain with a reasonably regular boundary (for example, Lipschitz, bounded or unbounded with a suitable behavior at infinity) and outer unit normal $\nu$. For $\alpha\in\RR$ denote by $R^\alpha_\Omega$ the Laplacian in $\Omega$, $u\mapsto-\Delta u$, with the Robin boundary condition
	$\partial_\nu u+\alpha u=0$ on $\partial \Omega$, which is rigorously defined
	as the unique self-adjoint operator in $L^2(\Omega)$ generated by the hermitian sesquilinear form $r^\alpha_\Omega$
	given by
	\[
	r^\alpha_\Omega(u,u):=\dint_\Omega |\nabla u|^2\dd x+\alpha\dint_{\partial\Omega}|u|^2\dd \sigma, \quad \cD(r^\alpha_\Omega):=H^1(\Omega),
	\]
	where by $\dd x$ and $\dd\sigma$ we denote the Lebesgue measure and the hypersurface measure in $\RR^n$ respectively, the symbol $H^1$ stands for the first-order Sobolev space, and the restriction of $u\in H^1(\Omega)$ on $\partial \Omega$ in the second integral on the right-hand side is understood in the sense of the standard trace theorems for Sobolev spaces. Further denote
	\[
	E(R^\alpha_\Omega):=\inf\spec R^\alpha_\Omega\equiv 
	\inf_{u\in \cD(r^\alpha_\Omega), \, u\ne 0}\dfrac{r^\alpha_\Omega(u,u)}{\|u\|^2_{L^2(\Omega)}},
	\]
	in particular, the quantity $E(R^\alpha_\Omega)$ is simply the lowest eigenvalue of $R^\alpha_\Omega$ for bounded Lipschitz domains $\Omega$.
	
	The dependence of $E(R^\alpha_\Omega)$ on the Robin parameter $\alpha$ and the geometric properties of $\Omega$ has been given a considerable attention during the last decade, see the reviews in \cite{DFK,kobp}.
	Of particular interest is the case of large negative $\alpha$ due to its applications to some reaction-diffusion equations and
	the appearance of boundary effects \cite{los,LP}. If $\Omega$ is a bounded Lipschitz domain, then for some $c_\Omega\ge 1$ one has the two-sided estimate
	\[
	-c_\Omega\alpha^2\le E(R^\alpha_\Omega)\le-\alpha^2 \text{ for }\alpha<0 \text{ with } |\alpha| \text{ large enough, }
	\]
	see~\cite[Prop.~4.12]{DFK} for the upper bound and~\cite[Lem.~2.7]{kobp} for the lower bound. Remark that these asymptotic estimates fail for non-Lipschitz domains, see \cite{vogel} and references therein.
	On the other hand, as shown in \cite{bp,LP}, if $\Omega$ has a better regularity (for example, if it is a so-called corner domain), then a more precise asymptotic expansion holds,
	\begin{equation}
		\label{asymp00}
		E(R^\alpha_\Omega)=-C_\Omega \alpha^2+o(\alpha^2) \text{ for }\alpha\to-\infty,
	\end{equation}
	with some constant $C_\Omega\ge 1$, and $C_\Omega=1$ if $\Omega$ is a bounded $C^1$ domain \cite{lz}.
	We refer to \cite{DFK,kobp} for a review of further asymptotic results for $E(R_\Omega^\alpha)$ under various additional assumptions on the regularity and geometric properties of $\Omega$, and we also mention the challenging case of large complex $\alpha$ \cite{bkl,krrs}.
	
	Based on the above observations, Bucur, Freitas and  Kennedy asked in \cite[Open question 4.17]{DFK} whether the asymptotic expansion \eqref{asymp00}  holds for any bounded Lipschitz domain $\Omega$ with a suitable constant $C_\Omega$. In this paper we give a \emph{negative} answer. More specifically, our main result reads as follows:
	\begin{theorem}[Existence of ``bad'' domains]\label{thm1}
		There are
		\begin{itemize}
			\item a bounded Lipschitz domain $\Omega\subset\RR^2$,
			\item two disjoint compact intervals $I', I''\subset (-\infty,0)$,
			\item two sequences $(\beta'_k)_{k\in\NN}, (\beta''_k)_{k\in\NN}\subset(-\infty,0)$ converging to $-\infty$,
		\end{itemize}			
		such that 
		\[
		\dfrac{E\big(R_\Omega^{\beta'_k}\big)}{(\beta'_k)^2}\in I' \quad \text{and}\quad
		\dfrac{E\big(R_\Omega^{\beta''_k}\big)}{(\beta''_k)^2}\in I'' \quad \text{for any } k\in\NN.
		\]
	\end{theorem}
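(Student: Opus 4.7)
The plan is to construct a bounded Lipschitz domain $\Omega\subset\RR^2$ whose boundary carries a sequence of ``almost-corner'' features that are active only in narrow scale windows, interleaved with scale gaps in which the boundary looks nearly flat. Fix an opening angle $\theta_0\in(0,\pi)$ and a decreasing sequence of scales $r_k\to 0$, together with a much finer ``tip-smoothing'' scale $\rho_k\ll r_k$, arranged so that $r_{k+1}\ll \rho_k$ for every $k$; hence the windows $[\rho_k,r_k]$ are pairwise disjoint with large multiplicative gaps $(r_{k+1},\rho_k)$ between them. Along an otherwise smooth piece of $\partial\Omega$, at points $P_k\to P_\infty$, place a notch consisting of a symmetric V of opening $\theta_0$ and arm length $r_k$ whose tip is rounded at scale $\rho_k$; the ``V-ness'' of notch $k$ is then detectable only at length scales in the window $[\rho_k,r_k]$. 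Since every notch uses the same opening $\theta_0\in(0,\pi)$, the resulting domain is uniformly Lipschitz.

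I will exploit the scaling identity $E(R^\alpha_\Omega)=\alpha^2\,E(R^{-1}_{|\alpha|\Omega})$, which turns the ratio $E(R^\alpha_\Omega)/\alpha^2$ into a principal Robin eigenvalue at parameter $-1$ on the dilated domain. For the first sequence, pick $\beta'_k$ in the ``resonant'' regime with $1/|\beta'_k|$ in the middle of the window $[\rho_k,r_k]$, and use as quasi-mode a cut-off Robin-$(-1)$ ground state of the infinite sector of opening $\theta_0$, rescaled by $|\beta'_k|^{-1}$ and placed at the notch $P_k$. Since $1/|\beta'_k|$ is simultaneously much larger than $\rho_k$ and much smaller than $r_k$, the quasi-mode fits inside the exact cone portion of the notch, and the variational principle yields
\[
E(R^{\beta'_k}_\Omega)\le-\bigl(1-o(1)\bigr)\frac{(\beta'_k)^2}{\sin^2(\theta_0/2)},
\]
so the ratio lies in any fixed neighborhood $I'$ of $-1/\sin^2(\theta_0/2)$ for $k$ large.

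For the second sequence, pick $\beta''_k$ in a gap, e.g.\ with $1/|\beta''_k|=\sqrt{r_{k+1}\rho_k}$, and aim to prove the matching lower bound $E(R^{\beta''_k}_\Omega)\ge-(1+o(1))(\beta''_k)^2$; combined with the universal upper bound $E(R^\alpha_\Omega)\le-\alpha^2$ from~\cite{DFK}, this puts the ratio inside any fixed neighborhood $I''$ of $-1$. The tool is a Neumann bracketing: cover $\Omega$ by patches of diameter $C/|\beta''_k|$ for a large fixed $C$, arranged so that inside every patch the boundary is, at its own scale, uniformly close to a hyperplane. Such near-flatness holds precisely in the off-resonant regime, because every notch has either $r_j<C^{-1}/|\beta''_k|$ (too small to protrude from any patch) or $\rho_j>C/|\beta''_k|$ (in which case the smoothed tip alone fits in a patch and looks essentially flat). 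A quantitative near-flat Robin lower estimate on each patch gives $\ge-(1+o(1))(\beta''_k)^2$, and Neumann bracketing propagates this to $\Omega$. Since $\theta_0<\pi$, $I'$ and $I''$ can be chosen disjoint.

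The hard step is the off-resonant lower bound: it demands a quantitative near-flat Robin estimate, uniform in the large parameter, together with a careful patch decomposition straddling each notch so that no notch is half-captured by a patch (which is why the choice $1/|\beta''_k|=\sqrt{r_{k+1}\rho_k}$, well interior to a gap, is important). Verifying that $\Omega$ is Lipschitz despite the infinitely many shrinking notches is easier: since the opening $\theta_0\in(0,\pi)$ is common to all notches, the global Lipschitz constant of $\partial\Omega$ can be taken to be $\tan\bigl((\pi-\theta_0)/2\bigr)$ irrespective of the number of notches.
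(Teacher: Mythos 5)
Your strategy — corner features at a geometric hierarchy of scales, with the Robin parameter probing one scale at a time — rests on the same resonance mechanism as the paper's, but the architecture is genuinely different, and the difference is load-bearing. The paper uses \emph{two} opening angles $\theta'\ne\theta''$ at alternating scales $t^n$, glues the corresponding blocks $U_n$ along vertical lines into an unbounded Lipschitz domain $U$, and decouples them by a single Dirichlet--Neumann bracketing along the gluing lines; the intervals $I'$, $I''$ are then two resonant values $\approx-1/\sin^2\theta'$ and $\approx-1/\sin^2\theta''$, and the off-resonance inequalities are proved once, for one model block (Lemmas~\ref{lem5} and~\ref{lem6}), and propagated to every block by the scaling identity~\eqref{eq-scale}. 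You instead use a single angle $\theta_0$ and take $I''$ to be a neighborhood of the flat value $-1$, which shifts the entire technical weight onto the off-resonant lower bound $E(R^{\beta''_k}_\Omega)\ge-(1+o(1))(\beta''_k)^2$. You should also record the easy \emph{lower} bound in the resonant case — $E(R^{\beta'_k}_\Omega)\ge -(\beta'_k)^2/\sin^2(\theta_0/2)$ from the global Lipschitz constant, exactly as in the lower-bound half of Lemma~\ref{lem4} — since an upper bound alone does not confine the ratio to the compact interval $I'$.

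The genuine gap is the off-resonant lower bound, which you announce but do not prove. With $r_{k+1}\ll\rho_k\ll r_k$ the smoothed tips of the sub-Robin-scale notches $j>k$ have curvature of order $1/\rho_j\gg|\beta''_k|$, so ``near-flat'' cannot be meant in a pointwise curvature sense; moreover near the accumulation point $P_\infty$ a single patch of diameter $C/|\beta''_k|$ may contain arbitrarily many such notches, so ``no notch is half-captured'' does not by itself save the patchwise estimate. Proving the needed bound uniformly requires essentially the two ingredients the paper develops: a small-$\alpha$ (equivalently, sub-scale-notch) perturbative estimate as in Lemma~\ref{lem5}, which only works because each notch is embedded in a long flat collar of adjustable width $M$ so the flat bulk dominates the first-order coefficient, and a large-$\alpha$ (resolved, smooth-tip) tubular-neighborhood estimate as in Lemma~\ref{lem6}. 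The cleanest repair, even if you keep one angle, is to adopt the paper's block structure: enclose each notch in a scaled copy of a fixed block with a built-in flat collar, separate the blocks by straight segments, and decouple with Dirichlet--Neumann bracketing at the interfaces. The off-resonant lower bound then reduces to two one-parameter statements about a single model block, as packaged in Corollary~\ref{corol7}, rather than a uniform statement over infinitely many geometrically inequivalent patches.
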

	For	a domain $\Omega$ as in Theorem~\ref{thm1} the limit $\lim\limits_{\alpha\to-\infty}E(R_\Omega^\alpha)/\alpha^2$ does not exist and the asymptotics \eqref{asymp00} fails. By taking the direct products of $\Omega$ with finite intervals one produces counterexamples to \eqref{asymp00} in any dimension $n\ge 2$.

	The rest of the paper is devoted to the proof of Theorem~\ref{thm1}. In Section~\ref{sec2} we construct
	a ``building block'' which is an unbounded domain such that the lowest eigenvalue of a Laplacian with mixed Robin-Dirichlet/Neumann boundary conditions
	is nicely controlled in terms of some geometric parameters. In Section \ref{ssec31} we construct first an unbounded ``bad'' domain with Lipschitz boundary by gluing together
	an infinite number of suitably scaled copies of the building blocks. Using adapted truncations of the unbounded ``bad'' domain we construct a bounded ``bad'' domain $\Omega$
	satisfying the conditions of Theorem~\ref{thm1} in Subsection~\ref{ssec32} (in fact, its boundary is smooth except at a single point). In Subsection \ref{ssec33} we show that the constructions can be adapted
	to produce another example of a ``bad'' domain which is non-compact but has a $C^\infty$ boundary.

	\section{Preparatory constructions}\label{sec2}
	
	Let us recall the following known facts, see \cite[Ex.~2.4 and Ex.~2.5]{LP}.

	\begin{lemma}\label{lem-halfline}
		For any $f\in H^1(0,\infty)$ and $\alpha<0$, we have
		\[
		\int_0^\infty \big|f'(t)\big|^2\dd t+\alpha\big|f(0)\big|^2\ge -\alpha^2 \int_0^\infty \big|f(t)\big|^2\dd t,
		\]
		and the equality is attained for the function $f:\,t\mapsto \exp (\alpha t)$.
	\end{lemma}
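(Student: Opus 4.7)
The inequality is the standard one-dimensional Robin quadratic-form bound, and the hint that equality holds for $f(t)=e^{\alpha t}$, which satisfies $f'=\alpha f$, immediately suggests completing the square around this identity. Concretely, the plan is to start from the obvious inequality
\[
0\le \int_0^\infty \big|f'(t)-\alpha f(t)\big|^2\dd t
\]
and expand the square. Since $\alpha\in\RR$, this gives
\[
0\le \int_0^\infty |f'|^2\dd t \;-\;\alpha\int_0^\infty\!\big(f'\bar f+\bar{f}'f\big)\dd t\;+\;\alpha^2\int_0^\infty|f|^2\dd t.
\]

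The middle integrand is $(|f|^2)'$, so the task reduces to showing
\[
\int_0^\infty \big(|f|^2\big)'(t)\dd t=-\big|f(0)\big|^2,
\]
which is the only non-formal step. For this I would invoke two standard properties of $H^1(0,\infty)$: (i) the trace $f(0)$ is well defined, and (ii) $|f|^2\in W^{1,1}(0,\infty)$ with derivative $f'\bar f+\bar{f}'f$, whose $L^1$-norm is bounded by $\|f'\|_{L^2}\|f\|_{L^2}$ via Cauchy--Schwarz. Hence $|f|^2$ is absolutely continuous, admits a limit at $+\infty$, and that limit must vanish because $|f|^2\in L^1(0,\infty)$. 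The fundamental theorem of calculus then gives the boundary evaluation above, and substituting it back yields
\[
0\le \int_0^\infty |f'|^2\dd t+\alpha\big|f(0)\big|^2+\alpha^2\int_0^\infty|f|^2\dd t,
\]
which is exactly the asserted inequality after rearrangement.

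For equality, I would note that the square above vanishes iff $f'=\alpha f$ a.e., i.e.\ $f(t)=f(0)e^{\alpha t}$; conversely, when $\alpha<0$, the function $t\mapsto e^{\alpha t}$ lies in $H^1(0,\infty)$ and a direct computation verifies equality. The only conceivable obstacle is the vanishing of $|f|^2$ at infinity, but it is purely routine, so I do not expect any real difficulty beyond carefully invoking the right trace and decay properties of $H^1(0,\infty)$.
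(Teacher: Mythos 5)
The paper does not actually prove this lemma; it presents it as a known fact with a pointer to \cite[Ex.~2.4 and Ex.~2.5]{LP}, so there is no in-text argument to compare against. Your completing-the-square proof is correct and is the standard derivation: expanding $0\le\int_0^\infty|f'-\alpha f|^2\dd t$ and using the identity $\int_0^\infty(|f|^2)'\dd t=-|f(0)|^2$ (which you correctly justify by noting that $|f|^2\in W^{1,1}(0,\infty)\cap L^1(0,\infty)$ for $f\in H^1(0,\infty)$, so it is absolutely continuous with limit $0$ at $+\infty$) gives exactly the asserted inequality after rearranging, and the equality case $f'=\alpha f$ a.e.\ singles out the exponential. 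No gaps.
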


	\begin{figure}[t]
		\centering

		\tikzset{every picture/.style={line width=0.75pt}} 
		
		\scalebox{0.65}{\begin{tikzpicture}[x=0.75pt,y=0.75pt,yscale=-1,xscale=1]
				
				\draw  [color={rgb, 255:red, 155; green, 155; blue, 155 }  ,draw opacity=1 ][fill={rgb, 255:red, 155; green, 155; blue, 155 }  ,fill opacity=1 ] (189.86,75.69) -- (239.75,173.5) -- (144,173.3) -- cycle ;
				\draw [color={rgb, 255:red, 74; green, 74; blue, 74 }  ,draw opacity=1 ] (89,75.3) -- (290.6,75.3)(190.4,41.5) -- (190.4,174.09) (283.6,70.3) -- (290.6,75.3) -- (283.6,80.3) (185.4,48.5) -- (190.4,41.5) -- (195.4,48.5)  ;
				\draw [line width=3]    (141.25,173.5) -- (190.25,75) -- (239.75,173.5) ;
				\draw  [draw opacity=0] (210.61,120.41) .. controls (204.39,123.2) and (197.5,124.75) .. (190.25,124.75) .. controls (183,124.75) and (176.11,123.2) .. (169.89,120.41) -- (190.25,75) -- cycle ; \draw  [color={rgb, 255:red, 0; green, 0; blue, 0 }  ,draw opacity=1 ] (210.61,120.41) .. controls (204.39,123.2) and (197.5,124.75) .. (190.25,124.75) .. controls (183,124.75) and (176.11,123.2) .. (169.89,120.41) ;

				\draw (267,54.8) node [anchor=north west][inner sep=0.75pt]  [color={rgb, 255:red, 128; green, 128; blue, 128 }  ,opacity=1 ]  {$x$};
				\draw (201,43.4) node [anchor=north west][inner sep=0.75pt]  [color={rgb, 255:red, 128; green, 128; blue, 128 }  ,opacity=1 ]  {$y$};
				\draw (171,124.4) node [anchor=north west][inner sep=0.75pt]    {$\theta $};
				\draw (196,124.4) node [anchor=north west][inner sep=0.75pt]    {$\theta $};
				\draw (155.6,148) node [anchor=north west][inner sep=0.75pt]  [font=\large]  {$S_{\theta }$};

		\end{tikzpicture}}\caption{\small The infinite sector $S_\theta$ in Lemma \ref{lem-sector}.}\label{fig-sector}
		
	\end{figure}
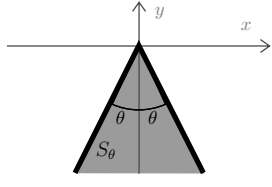

	\begin{lemma}\label{lem-sector}
		For any $\theta\in (0,\frac{\pi}{2})$ denote
		\[
		S_\theta:=\big\{(x,y)\in\RR^2:\  y< -|x|\cot \theta\big\},
		\]
		which  is an infinite sector with opening angle $2\theta$, see Fig.~\ref{fig-sector}.
		Then for any $v\in H^1(S_\theta)$ and any $\alpha<0$,
		\[
		\iint_{S_\theta}|\nabla v|^2\dd x\dd y+\alpha \int_{\partial S_\theta} |v|^2\dd\sigma\ge -\dfrac{\alpha^2}{\sin^2\theta} \iint_{S_\theta}|v|^2\dd x\dd y,
		\]
		and the equality is attained for the function
		\[
		v:\ (x,y)\mapsto \exp\Big(-\dfrac{\alpha y}{\sin\theta}\Big).
		\]
	\end{lemma}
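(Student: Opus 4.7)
My plan is a slicing argument that reduces the two-dimensional inequality to the one-dimensional Lemma \ref{lem-halfline}. For fixed $x\in\RR$ the vertical slice $\{y:(x,y)\in S_\theta\}$ is the half-line $(-\infty,-|x|\cot\theta)$, with its single boundary point at the upper end. After a translation placing this endpoint at $0$, I would apply Lemma \ref{lem-halfline} to the slice function $y\mapsto v(x,y)$ (which belongs to $H^1$ for a.e.\ $x$ by Fubini) with a parameter $\beta<0$ still to be chosen, obtaining
\[
\int_{-\infty}^{-|x|\cot\theta}\bigl|\partial_y v(x,y)\bigr|^2\dd y+\beta\bigl|v(x,-|x|\cot\theta)\bigr|^2\ge -\beta^2\int_{-\infty}^{-|x|\cot\theta}\bigl|v(x,y)\bigr|^2\dd y.
\]

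The second step is to integrate over $x\in\RR$. Fubini handles the two volume integrals directly. For the boundary term, parametrizing each edge of $\partial S_\theta$ by $x$ shows $\dd\sigma=\dd x/\sin\theta$, so the choice $\beta=\alpha/\sin\theta$ (negative since $\alpha<0$) converts the slice boundary contributions into $\alpha\int_{\partial S_\theta}|v|^2\dd\sigma$ and at the same time produces the coefficient $-\beta^2=-\alpha^2/\sin^2\theta$ on the right-hand side. Dropping the non-negative term $|\partial_x v|^2$ via $|\nabla v|^2\ge|\partial_y v|^2$ completes the inequality.

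The sharpness claim comes essentially for free once the equality cases in the two steps are tracked: for $v(x,y)=\exp(-\alpha y/\sin\theta)$ we have $\partial_x v\equiv 0$ (so nothing is lost in the final bound) and the restriction $y\mapsto v(x,y)$ is proportional to $\exp(\beta y)$ with $\beta=\alpha/\sin\theta$, which is exactly the equality case of Lemma \ref{lem-halfline}. Equivalently, one may verify directly that on either edge the outer unit normal has $y$-component $\sin\theta$, whence the Robin condition $\partial_\nu v+\alpha v=0$ reduces to $(\sin\theta)\partial_y v+\alpha v=0$, which holds by construction, and $-\Delta v=-(\alpha/\sin\theta)^2 v$ gives the eigenvalue. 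The only technical point is the slicewise application of Lemma \ref{lem-halfline} for a general $H^1$ function, which is standard from the ACL characterisation of Sobolev functions and presents no real obstacle; the key idea is simply the fortunate identity $\nu_y=\sin\theta$ on both edges, which is what makes the one-dimensional bound align with the two-dimensional Robin term after the rescaling $\beta=\alpha/\sin\theta$.
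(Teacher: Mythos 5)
Your proof is correct. The paper itself gives no proof of Lemma~\ref{lem-sector}, citing it as a known fact from Levitin--Parnovski \cite{LP}; but your slicing argument is exactly the standard one, and it is precisely the mechanism the paper deploys in the lower-bound half of the proof of Lemma~\ref{lem4}, where the factor $\sqrt{1+|h'_{\theta,L,\varepsilon}|^2}\le 1/\sin\theta$ plays the role of your exact identity $\dd\sigma=\dd x/\sin\theta$ on the cone edges. Your choice $\beta=\alpha/\sin\theta$, the bookkeeping of the boundary term, the discarding of $|\partial_x v|^2$, and the identification of the equality case (constant in $x$, pure exponential in $y$) are all right, and your secondary verification via $\nu_y=\sin\theta$ and the Robin condition for $\exp(-\alpha y/\sin\theta)$ is a clean sanity check.
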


	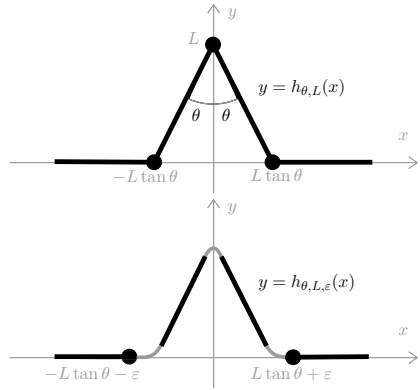
\begin{figure}[b]
		\centering

		\tikzset{every picture/.style={line width=0.75pt}} 

		\scalebox{0.6}{\begin{tikzpicture}[x=0.75pt,y=0.75pt,yscale=-1,xscale=1]
				
				\draw [color={rgb, 255:red, 155; green, 155; blue, 155 }  ,draw opacity=1 ] (1,140.84) -- (340,140.84)(170.75,8.5) -- (170.75,166.5) (333,135.84) -- (340,140.84) -- (333,145.84) (165.75,15.5) -- (170.75,8.5) -- (175.75,15.5)  ;
				\draw [line width=3]    (38.5,140) -- (121.25,140.5) -- (170.25,42) -- (219.75,140.5) -- (302.75,140.5) ;
				\draw  [draw opacity=0] (190.61,87.41) .. controls (184.39,90.2) and (177.5,91.75) .. (170.25,91.75) .. controls (163,91.75) and (156.11,90.2) .. (149.89,87.41) -- (170.25,42) -- cycle ; \draw  [color={rgb, 255:red, 155; green, 155; blue, 155 }  ,draw opacity=1 ] (190.61,87.41) .. controls (184.39,90.2) and (177.5,91.75) .. (170.25,91.75) .. controls (163,91.75) and (156.11,90.2) .. (149.89,87.41) ;  
				\draw  [fill={rgb, 255:red, 0; green, 0; blue, 0 }  ,fill opacity=1 ] (164.13,41.88) .. controls (164.13,38.49) and (166.87,35.75) .. (170.25,35.75) .. controls (173.63,35.75) and (176.38,38.49) .. (176.38,41.88) .. controls (176.38,45.26) and (173.63,48) .. (170.25,48) .. controls (166.87,48) and (164.13,45.26) .. (164.13,41.88) -- cycle ;
				\draw  [fill={rgb, 255:red, 0; green, 0; blue, 0 }  ,fill opacity=1 ] (213.63,140.5) .. controls (213.63,137.12) and (216.37,134.38) .. (219.75,134.38) .. controls (223.13,134.38) and (225.88,137.12) .. (225.88,140.5) .. controls (225.88,143.88) and (223.13,146.63) .. (219.75,146.63) .. controls (216.37,146.63) and (213.63,143.88) .. (213.63,140.5) -- cycle ;
				\draw  [fill={rgb, 255:red, 0; green, 0; blue, 0 }  ,fill opacity=1 ] (94.5,303.5) .. controls (94.5,300.12) and (97.24,297.38) .. (100.63,297.38) .. controls (104.01,297.38) and (106.75,300.12) .. (106.75,303.5) .. controls (106.75,306.88) and (104.01,309.63) .. (100.63,309.63) .. controls (97.24,309.63) and (94.5,306.88) .. (94.5,303.5) -- cycle ;
				\draw [color={rgb, 255:red, 155; green, 155; blue, 155 }  ,draw opacity=1 ] (1,304.34) -- (340,304.34)(170.75,172) -- (170.75,330) (333,299.34) -- (340,304.34) -- (333,309.34) (165.75,179) -- (170.75,172) -- (175.75,179)  ;
				\draw [color={rgb, 255:red, 155; green, 155; blue, 155 }  ,draw opacity=1 ][line width=2.25]    (299.75,303.5) .. controls (272.25,304) and (235.25,304.5) .. (230.75,304) .. controls (226.25,303.5) and (219.75,303.5) .. (214.75,294.5) .. controls (209.75,285.5) and (180.75,227.5) .. (176.25,218) .. controls (171.75,208.5) and (166.75,213) .. (163.75,219.5) .. controls (160.75,226) and (131.25,286.5) .. (128.25,292) .. controls (125.25,297.5) and (121.25,304) .. (112.75,304) .. controls (104.25,304) and (82.25,305) .. (38.5,303.5) ;
				\draw [line width=3]    (38.5,303.5) -- (106.75,303.5) ;
				\draw [line width=3]    (230.75,304) -- (299.75,303.5) ;
				\draw [line width=3]    (127.25,294.5) -- (163.75,219.5) ;
				\draw [line width=3]    (214.75,294.5) -- (177.17,219.67) ;
				\draw  [fill={rgb, 255:red, 0; green, 0; blue, 0 }  ,fill opacity=1 ] (230.75,304) .. controls (230.75,300.62) and (233.49,297.88) .. (236.88,297.88) .. controls (240.26,297.88) and (243,300.62) .. (243,304) .. controls (243,307.38) and (240.26,310.13) .. (236.88,310.13) .. controls (233.49,310.13) and (230.75,307.38) .. (230.75,304) -- cycle ;
				\draw  [fill={rgb, 255:red, 0; green, 0; blue, 0 }  ,fill opacity=1 ] (115.13,140.5) .. controls (115.13,137.12) and (117.87,134.38) .. (121.25,134.38) .. controls (124.63,134.38) and (127.38,137.12) .. (127.38,140.5) .. controls (127.38,143.88) and (124.63,146.63) .. (121.25,146.63) .. controls (117.87,146.63) and (115.13,143.88) .. (115.13,140.5) -- cycle ;
				
				\draw (323,114.4) node [anchor=north west][inner sep=0.75pt]  [color={rgb, 255:red, 155; green, 155; blue, 155 }  ,opacity=1 ]  {$x$};
				\draw (181,10.4) node [anchor=north west][inner sep=0.75pt]  [color={rgb, 255:red, 155; green, 155; blue, 155 }  ,opacity=1 ]  {$y$};
				\draw (206.5,68.9) node [anchor=north west][inner sep=0.75pt]    {$y=h_{\theta,L}( x)$};
				\draw (151,94.4) node [anchor=north west][inner sep=0.75pt]    {$\theta $};
				\draw (176,94.4) node [anchor=north west][inner sep=0.75pt]    {$\theta $};
				\draw (147.5,29.9) node [anchor=north west][inner sep=0.75pt]  [color={rgb, 255:red, 155; green, 155; blue, 155 }  ,opacity=1 ]  {$L$};
				\draw (200,144.9) node [anchor=north west][inner sep=0.75pt]  [color={rgb, 255:red, 155; green, 155; blue, 155 }  ,opacity=1 ]  {$L\tan \theta $};
				\draw (85,145.4) node [anchor=north west][inner sep=0.75pt]  [color={rgb, 255:red, 155; green, 155; blue, 155 }  ,opacity=1 ]  {$-L\tan \theta $};
				\draw (323,277.9) node [anchor=north west][inner sep=0.75pt]  [color={rgb, 255:red, 155; green, 155; blue, 155 }  ,opacity=1 ]  {$x$};
				\draw (181,173.9) node [anchor=north west][inner sep=0.75pt]  [color={rgb, 255:red, 155; green, 155; blue, 155 }  ,opacity=1 ]  {$y$};
				\draw (206.5,232.4) node [anchor=north west][inner sep=0.75pt]    {$y=h_{\theta,L,\varepsilon }( x)$};
				\draw (28.67,310.73) node [anchor=north west][inner sep=0.75pt]  [color={rgb, 255:red, 155; green, 155; blue, 155 }  ,opacity=1 ]  {$-L\tan \theta -\varepsilon $};
				\draw (200,310.4) node [anchor=north west][inner sep=0.75pt]  [color={rgb, 255:red, 155; green, 155; blue, 155 }  ,opacity=1 ]  {$L\tan \theta +\varepsilon $};
		\end{tikzpicture}}
		
		\caption{The graphs of the functions $h_{\theta,L}$ (above) and $h_{\theta,L,\varepsilon}$ (below). Note that $h_{\theta,L,\varepsilon}$ coincides with $h_{\theta,L}$ except in the $\varepsilon$-neighborhoods of $0$ and $\pm L\tan\theta$: The dark parts of the graph of $h_{\theta,L,\varepsilon}$ coincide with the respective parts of the graph of $h_{\theta,L}$.}
		\label{fig:hh}
	\end{figure}
	
	For $\theta\in\big(0,\frac{\pi}{2}\big)$ and $L>0$ define a function
	\[
	h_{\theta,L}:\RR\to\RR,
	\qquad
	h_{\theta,L}(x):=\begin{cases}
		L-|x|\cot\theta, & |x|\le L\tan \theta,\\
		0, & |x|>L\tan\theta.
	\end{cases}
	\]
	The graph of $h_{\theta,L}$ is shown in Figure \ref{fig:hh}. Pick any $\varphi\in C^\infty_c(\RR)$ with
	\[
	\varphi\ge 0,\quad \supp\varphi\subset(-1,1),\quad \int_\RR \varphi(x)\dd x=1, \quad	\text{$\varphi$ is even,}
	\]
	and for $\varepsilon>0$ consider the functions
	\[
	\varphi_\varepsilon:=\dfrac{1}{\varepsilon}\varphi\Big(\frac{\cdot}{\varepsilon}\Big),
	\qquad
	h_{\theta,L,\varepsilon}:=h_{\theta,L}\star \varphi_\varepsilon,
	\]
	with $\star$ being the convolution product. Then $h_{\theta,L,\varepsilon}$ is 
	$C^\infty$ with
	\begin{equation}
		\label{eq-hbound}
		\|h'_{\theta,L,\varepsilon}\|_\infty=\|h'_{\theta,L}\star \varphi_{\varepsilon}\|_\infty\le 
		\|h'_{\theta,L}\|_\infty\|\varphi_{\varepsilon}\|_1=\|h'_{\theta,L}\|_\infty=\cot\theta,
	\end{equation}
	and $\|h_{\theta,L,\varepsilon}-h_{\theta,L}\|_\infty\xrightarrow{\varepsilon\to 0^+}0$.
	Moreover, as $h_{\theta,L}$ is a piecewise affine function,
	the function $h_{\theta,L,\varepsilon}$ coincides with $h_{\theta,L}$ 
	except in the $\varepsilon$-neighborhoods of the points  $0$ and $\pm L\cot\theta$ (at which $h_{\theta,L}$ is non-smooth), see Fig.~\ref{fig:hh}.

	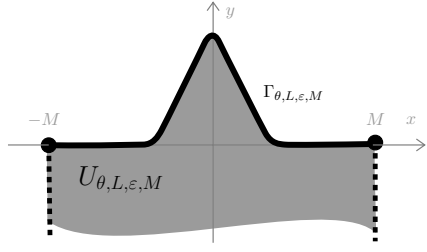
\begin{figure}[t]
		
		\centering

		\tikzset{every picture/.style={line width=0.75pt}} 
		
		\scalebox{0.6}{\begin{tikzpicture}[x=0.75pt,y=0.75pt,yscale=-1,xscale=1]
				
				\draw  [color={rgb, 255:red, 155; green, 155; blue, 155 }  ,draw opacity=1 ][fill={rgb, 255:red, 155; green, 155; blue, 155 }  ,fill opacity=1 ][line width=2.25]  (350,173) .. controls (322.5,173.5) and (281.25,174) .. (276.75,173.5) .. controls (272.25,173) and (265.75,173) .. (260.75,164) .. controls (255.75,155) and (226.75,97) .. (222.25,87.5) .. controls (217.75,78) and (212.75,82.5) .. (209.75,89) .. controls (206.75,95.5) and (177.25,156) .. (174.25,161.5) .. controls (171.25,167) and (167.25,173.5) .. (158.75,173.5) .. controls (150.25,173.5) and (123.5,174) .. (80.5,174) .. controls (80.4,227.4) and (81.2,201.4) .. (80.8,236.6) .. controls (132.4,272.2) and (310,213) .. (351.2,244.6) .. controls (350.4,197) and (350,224.6) .. (350,173) -- cycle ;
				\draw [line width=3]  [dash pattern={on 3.38pt off 3.27pt}]  (80.5,174) -- (81.6,253.4) ;
				\draw [line width=3]  [dash pattern={on 3.38pt off 3.27pt}]  (352,171.8) -- (352,254.6) ;
				\draw  [fill={rgb, 255:red, 0; green, 0; blue, 0 }  ,fill opacity=1 ] (74.38,174) .. controls (74.38,170.62) and (77.12,167.88) .. (80.5,167.88) .. controls (83.88,167.88) and (86.63,170.62) .. (86.63,174) .. controls (86.63,177.38) and (83.88,180.13) .. (80.5,180.13) .. controls (77.12,180.13) and (74.38,177.38) .. (74.38,174) -- cycle ;
				\draw  [fill={rgb, 255:red, 0; green, 0; blue, 0 }  ,fill opacity=1 ] (345.88,171.8) .. controls (345.88,168.42) and (348.62,165.68) .. (352,165.68) .. controls (355.38,165.68) and (358.13,168.42) .. (358.13,171.8) .. controls (358.13,175.18) and (355.38,177.93) .. (352,177.93) .. controls (348.62,177.93) and (345.88,175.18) .. (345.88,171.8) -- cycle ;
				\draw [color={rgb, 255:red, 128; green, 128; blue, 128 }  ,draw opacity=1 ] (46.6,173.8) -- (393.2,173.8)(217.2,54.2) -- (217.2,255.8) (386.2,168.8) -- (393.2,173.8) -- (386.2,178.8) (212.2,61.2) -- (217.2,54.2) -- (222.2,61.2)  ;
				\draw [color={rgb, 255:red, 0; green, 0; blue, 0 }  ,draw opacity=1 ][line width=3.75]    (350,173) .. controls (322.5,173.5) and (281.25,174) .. (276.75,173.5) .. controls (272.25,173) and (265.75,173) .. (260.75,164) .. controls (255.75,155) and (226.75,97) .. (222.25,87.5) .. controls (217.75,78) and (212.75,82.5) .. (209.75,89) .. controls (206.75,95.5) and (177.25,156) .. (174.25,161.5) .. controls (171.25,167) and (167.25,173.5) .. (158.75,173.5) .. controls (150.25,173.5) and (125.6,174.6) .. (80.5,174) ;
				
				\draw (376.2,148.2) node [anchor=north west][inner sep=0.75pt]  [color={rgb, 255:red, 155; green, 155; blue, 155 }  ,opacity=1 ]  {$x$};
				\draw (226.2,57.4) node [anchor=north west][inner sep=0.75pt]  [color={rgb, 255:red, 155; green, 155; blue, 155 }  ,opacity=1 ]  {$y$};
				\draw (257.7,122.3) node [anchor=north west][inner sep=0.75pt]    {$\Gamma _{\theta ,L,\varepsilon ,M}$};
				\draw (103.4,190) node [anchor=north west][inner sep=0.75pt]  [font=\LARGE]  {$U_{\theta ,L,\varepsilon ,M}$};
				\draw (61.7,145.8) node [anchor=north west][inner sep=0.75pt]  [color={rgb, 255:red, 155; green, 155; blue, 155 }  ,opacity=1 ]  {$-M$};
				\draw (342.9,145.4) node [anchor=north west][inner sep=0.75pt]  [color={rgb, 255:red, 155; green, 155; blue, 155 }  ,opacity=1 ]  {$M$};

		\end{tikzpicture}}
		
		\caption{The domain $U_{\theta,L,\varepsilon,M}$. Its boundary is decomposed into the part $\Gamma_{\theta,L,\varepsilon,M}$ (bold curve) and the side boundary (dotted lines).}\label{fig:ug}

	\end{figure}
	
	Introduce a additional parameter $M\ge L\tan\theta$
	and define a domain
	\begin{align*}
		U_{\theta,L,\varepsilon,M}&:=\big\{
		(x,y)\in\RR^2:\  x\in(-M,M),\ y< h_{\theta,L,\varepsilon}(x)\big\}
		\intertext{and a subset of its boundary}
		\Gamma_{\theta,L,\varepsilon,M}&:=\big\{
		(x,y)\in\RR^2:\  x\in(-M,M),\ y=h_{\theta,L,\varepsilon}(x)
		\big\}\subset \partial U_{\theta,L,\varepsilon,M},
	\end{align*}
	see Fig.~\ref{fig:ug}.
	In what follows we will be concerned with the study of Laplacians in scaled copies of $U_{\theta,L,\varepsilon,M}$ with special boundary conditions at different parts of the boundary. Namely,
	for $\alpha\in\RR$ (Robin parameter), $c>0$ (scaling parameter) and $\nu:=$\,the outer unit normal we denote:
	\begin{align*}
		Q^{\alpha,c}_{\theta,L,\varepsilon,M}&:=\parbox[t]{95mm}{the positive Laplacian in $cU_{\theta,L,\varepsilon,M}$ with the Robin boundary condition $\partial_\nu u+\alpha u=0$ on $c\Gamma_{\theta,L,\varepsilon,M}$ and \emph{Neumann} boundary condition on the remaining boundary.}
		\intertext{More rigorously,	$Q^{\alpha,c}_{\theta,L,\varepsilon,M}$ is the self-adjoint  operator in $L^2(cU_{\theta,L,\varepsilon,M})$ defined by the hermitian sesquilinear form $q^{\alpha,c}_{\theta,L,\varepsilon,M}$ given by}
		q^{\alpha,c}_{\theta,L,\varepsilon,M}(u,u)&=\iint_{cU_{\theta,L,\varepsilon,M}}|\nabla u|^2\dd x\dd y+\alpha\int_{c\Gamma_{\theta,L,\varepsilon,M}}|u|^2\dd \sigma
		\intertext{	on the domain $\cD(q^{\alpha,c}_{\theta,L,\varepsilon,M})=H^1(cU_{\theta,L,\varepsilon,M})$,}
		\Tilde Q^{\alpha,c}_{\theta,L,\varepsilon,M}&:= \parbox[t]{95mm}{the positive Laplacian in $cU_{\theta,L,\varepsilon,M}$ with the Robin boundary condition $\partial_\nu u+\alpha u=0$ on $c\Gamma_{\theta,L,\varepsilon,M}$ and \emph{Dirichlet} boundary condition on the remaining boundary.}
		\intertext{In other words, $\Tilde Q^{\alpha,c}_{\theta,L,\varepsilon,M}$ is the self-adjoint  operator in $L^2(cU_{\theta,L,\varepsilon,M})$ defined by the sesquilinear form $\Tilde q^{\,\alpha,c}_{\theta,L,\varepsilon,M}$ which is the restriction of the above form $q^{\alpha,c}_{\theta,L,\varepsilon,M}$
			on the domain}
		\cD(\Tilde q^{\alpha,c}_{\theta,L,\varepsilon,M})&:=\big\{
		u\in H^1(cU_{\theta,L,\varepsilon,M}):\, u=0 \text{ on } \partial (cU_{\theta,L,\varepsilon,M})\setminus 
		c\Gamma_{\theta,L,\varepsilon,M}\big\},
	\end{align*}
	where the condition $u=0$ is understood in the sense of Sobolev traces (the same convention will be used in the definitions of further operators and forms below).
	For a lower semibounded self-adjoint operator $A$ we will denote
	\[
	E(A):=\inf\spec A.
	\]

	Due  to the min-max principle,
	\begin{equation}
		\label{eq-qq}
		E(Q^{\alpha,c}_{\theta,L,\varepsilon,M})\le E(\Tilde Q^{\alpha,c}_{\theta,L,\varepsilon,M}).
	\end{equation}
	Furthermore,  a simple scaling argument shows that the operator $Q^{t\alpha,c}_{\theta,L,\varepsilon,M}$ is unitarily equivalent to $t^2 Q^{\alpha,tc}_{\theta,L,\varepsilon,M}$ and, similarly, the operator $\Tilde Q^{t\alpha,c}_{\theta,L,\varepsilon,M}$ is unitarily equivalent to $t^2 \Tilde Q^{\alpha,tc}_{\theta,L,\varepsilon,M}$, so we have
	\begin{equation}
	E(Q^{t\alpha,c}_{\theta,L,\varepsilon,M})=t^2 E(Q^{\alpha,tc}_{\theta,L,\varepsilon,M}),\quad
	E(\Tilde Q^{t\alpha,c}_{\theta,L,\varepsilon,M})=t^2 E(\Tilde Q^{\alpha,tc}_{\theta,L,\varepsilon,M}) \label{eq-scale}
	\end{equation}
	for any admissible choice of $(\theta,L,\varepsilon,M,\alpha,c)$ and any $t>0$. 
	
	\begin{lemma}\label{lem4}
		For any $\theta\in(0,\frac{\pi}{2})$ and $\delta>0$ there exist $\varepsilon_0>0$ and $L_0>0$ such that
		for all $\varepsilon\in (0,\varepsilon_0)$, $L\in(L_0,\infty)$ and $M\in (L\tan\theta,\infty)$ it holds that
		\[
			E(Q^{-1,1}_{\theta,L,\varepsilon,M}), E(\Tilde Q^{-1,1}_{\theta,L,\varepsilon,M})\in \Big[ -\dfrac{1}{\sin^2\theta},-\dfrac{1}{\sin^2\theta}+\delta\Big].
		\]
	\end{lemma}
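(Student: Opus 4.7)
By \eqref{eq-qq} it suffices to prove the lower bound $E(Q^{-1,1}_{\theta,L,\varepsilon,M})\ge -1/\sin^2\theta$ and the matching upper bound $E(\tilde Q^{-1,1}_{\theta,L,\varepsilon,M})\le -1/\sin^2\theta+\delta$. Write $h:=h_{\theta,L,\varepsilon}$ throughout.

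\emph{Lower bound.} For arbitrary $u\in H^1(U_{\theta,L,\varepsilon,M})$ I drop the $|\partial_x u|^2$ term in $|\nabla u|^2$ and invoke the slope bound $|h'|\le\cot\theta$ from \eqref{eq-hbound} to estimate the arc length element by $\sqrt{1+(h')^2}\le 1/\sin\theta$. Parametrising $\Gamma_{\theta,L,\varepsilon,M}$ as $(x,h(x))$ yields
\[
q^{-1,1}_{\theta,L,\varepsilon,M}(u,u) \;\ge\; \int_{-M}^{M}\bigg(\int_{-\infty}^{h(x)}|\partial_y u|^2\,\mathrm{d}y \;-\; \tfrac{1}{\sin\theta}|u(x,h(x))|^2\bigg)\mathrm{d}x.
\]
For a.e.\ $x$, Lemma~\ref{lem-halfline} applied to $t\mapsto u(x,h(x)-t)\in H^1(0,\infty)$ with parameter $-1/\sin\theta$ bounds the inner bracket below by $-(1/\sin^2\theta)\int_{-\infty}^{h(x)}|u|^2\,\mathrm{d}y$. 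Integration in $x$ gives the claim.

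\emph{Upper bound.} Fix $R\ge 1$ (depending only on $\theta,\delta$; to be chosen below) and $\chi\in C_c^\infty(\RR)$ with $0\le\chi\le 1$, $\chi\equiv 1$ on $[-R,R]$, $\mathrm{supp}\,\chi\subset(-R-1,R+1)$, $\|\chi'\|_\infty\le 2$. Set $L_0:=(R+2)/\tan\theta$ and $\varepsilon_0\le 1$, so that $R+1<L\tan\theta-\varepsilon$ and $R+1<M$ for every admissible $(L,\varepsilon,M)$. Then the test function
\[
w(x,y) := \chi(x)\exp\!\big(y/\sin\theta\big)
\]
belongs to $\cD(\tilde q^{\,-1,1}_{\theta,L,\varepsilon,M})$ (its support avoids the Dirichlet part of $\partial U_{\theta,L,\varepsilon,M}$ and the outer smoothing zone near $\pm L\tan\theta$). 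Performing the $y$-integral explicitly and writing $\omega(x):=\chi(x)^2 e^{2h(x)/\sin\theta}$ gives
\[
\frac{\tilde q^{\,-1,1}(w,w)}{\|w\|_{L^2(U)}^2} = -\frac{1}{\sin^2\theta}+\Sigma_{\mathrm{cut}}+\Sigma_{\mathrm{slope}},
\]
with
\[
\Sigma_{\mathrm{cut}} := \frac{\int(\chi')^2 e^{2h/\sin\theta}\,\mathrm{d}x}{\int\omega\,\mathrm{d}x}\ge 0, \qquad \Sigma_{\mathrm{slope}} := \frac{2}{\sin\theta}\bigg(\frac{1}{\sin\theta} - \frac{\int\omega\sqrt{1+(h')^2}\,\mathrm{d}x}{\int\omega\,\mathrm{d}x}\bigg)\ge 0
\]
(non-negativity of $\Sigma_{\mathrm{slope}}$ uses $\sqrt{1+(h')^2}\le 1/\sin\theta$ once more). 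The crucial observation is that $\omega$ concentrates near $x=0$ on a $\theta$-dependent scale independent of $L$: on the linear region of $h$ one has $e^{2h/\sin\theta}=e^{2L/\sin\theta}\exp(-2|x|\cot\theta/\sin\theta)$. Consequently $\Sigma_{\mathrm{cut}}=O(e^{-cR})$ for some $c=c(\theta)>0$. On the same linear region $\sqrt{1+(h')^2}=1/\sin\theta$ \emph{exactly}; the only ``bad'' zone inside $\mathrm{supp}\,\chi$ is $\{|x|<\varepsilon\}$, whose $\omega$-mass is $O(\varepsilon)$ relative to the total, so $\Sigma_{\mathrm{slope}}=O(\varepsilon)$. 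Choosing first $R$ large and then $\varepsilon_0$ small forces $\Sigma_{\mathrm{cut}}+\Sigma_{\mathrm{slope}}\le\delta$.

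\emph{Main obstacle.} The key subtlety is uniformity with respect to $M\in(L\tan\theta,\infty)$, which can be arbitrarily large. This is resolved because the lower bound argument is pointwise in $x$ (so $M$ enters only as an outer integration limit), and the test function has fixed support size depending only on $\theta,\delta$, independent of $M$. A minor secondary technicality is that the mollification shifts the peak value of $h$ below $L$ by at most $\varepsilon\cot\theta$; this is harmless as it appears only as a common factor in the numerator and denominator of the two error terms.
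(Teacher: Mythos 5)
Your lower bound argument is identical to the paper's: drop $|\partial_x u|^2$, bound the surface measure factor $\sqrt{1+(h')^2}\le 1/\sin\theta$ via \eqref{eq-hbound}, and apply Lemma~\ref{lem-halfline} fiberwise. Correct.

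Your upper bound is correct but takes a genuinely different route. The paper's test function $v_L(x,y)=\exp\bigl((y-L)/\sin\theta\bigr)\chi\bigl((y-L)/L\bigr)$ cuts off the sector ground state in the \emph{$y$-direction} near the tip of the tent (so the cutoff scales with $L$), and the conclusion follows from the convergence of the Rayleigh quotient to the sector value as $(\varepsilon,L)\to(0,\infty)$, leaning on Lemma~\ref{lem-sector}. You instead cut off in the \emph{$x$-direction} on a scale $R$ depending only on $(\theta,\delta)$, which makes the test function's profile entirely $L$- and $M$-independent; you then compute the Rayleigh quotient explicitly as $-1/\sin^2\theta+\Sigma_{\mathrm{cut}}+\Sigma_{\mathrm{slope}}$ with both error terms manifestly nonnegative (I verified the algebra of this decomposition), and control them via the exponential concentration of $e^{2h/\sin\theta}$ near $x=0$ and the $O(\varepsilon)$ relative mass of the smoothed zone. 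What this buys you: the double-limit argument is replaced by concrete error bounds uniform in $L>L_0$ and $M$; the sector lemma is not needed for this step (the identity $\sqrt{1+\cot^2\theta}=1/\sin\theta$ does the work directly); and the fact that $\Sigma_{\mathrm{cut}},\Sigma_{\mathrm{slope}}\ge 0$ shows both endpoints of the claimed interval are achieved only asymptotically, which is a small structural bonus. What the paper's version buys instead is a tighter conceptual link to the sector model that motivates the whole construction. One small remark: besides uniformity in $M$ (which you flag), the uniformity in $L$ is equally important and is implicitly secured by your bound $\Sigma_{\mathrm{cut}}\lesssim_\theta e^{-2R\cot\theta/\sin\theta}$, in which the common factor $e^{2L/\sin\theta}$ cancels between numerator and denominator; it is worth making that cancellation explicit.
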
	
	
	\begin{proof}
		Let $u\in \cD(q^{-1,1}_{\theta,L,\varepsilon,M})$, then
		\begin{align*}
			q^{-1,1}_{\theta,L,\varepsilon,M}(u,u)&=\iint_{U_{\theta,L,\varepsilon,M}}|\nabla u|^2\dd x\dd y-\int_{\Gamma_{\theta,L,\varepsilon,M}}|u|^2\dd \sigma\\
			&\ge \iint_{U_{\theta,L,\varepsilon,M}}|\partial_y u|^2\dd x\dd y-\int_{\Gamma_{\theta,L,\varepsilon,M}}|u|^2\dd \sigma\\
			&=\int_{-M}^M\bigg[
			\int_{-\infty}^{h_{\theta,L,\varepsilon}(x)} \big|\partial_y u(x,y)\big|^2\dd y\\
			&\quad-\sqrt{1+\big|h'_{\theta,L,\varepsilon}(x)\big|^2} \,\Big|u\big(x,h_{\theta,L,\varepsilon}(x)\big)\Big|^2
			\bigg]\dd x.
		\end{align*}
		By \eqref{eq-hbound}, for any $x\in(-M,M)$ we have
		\[
		\sqrt{1+\big|h'_{\theta,L,\varepsilon}(x)\big|^2}\le \sqrt{1+\cot ^2\theta}=\dfrac{1}{\sin\theta},
		\]
		therefore,
		\begin{align*}
			\int_{-\infty}^{h_{\theta,L,\varepsilon}(x)} &\big|\partial_y u(x,y)\big|^2\dd y
			-\sqrt{1+\big|h'_{\theta,L,\varepsilon}(x)\big|^2} \,\Big|u\big(x,h'_{\theta,L,\varepsilon}(x)\big)\Big|^2\\
			&\ge \int_{-\infty}^{h_{\theta,L,\varepsilon}(x)} \big|\partial_y u(x,y)\big|^2\dd y
			-\dfrac{1}{\sin\theta}\Big|u\big(x,h_{\theta,L,\varepsilon}(x)\big)\Big|^2\\
			\text{(use Lemma \ref{lem-halfline})}\quad	&\ge -\dfrac{1}{\sin^2\theta} \int_{-\infty}^{h_{\theta,L,\varepsilon}(x)} \big|u(x,y)\big|^2\dd y,
		\end{align*}
		and we arrive at
		\[
		q^{-1,1}_{\theta,L,\varepsilon,M}(u,u)\ge -\dfrac{1}{\sin^2\theta} \iint_{U_{\theta,L,\varepsilon,M}} \big|u(x,y)\big|^2\dd x\dd y.
		\]
		The min-max principle yields
		\[
		E(Q^{-1,1}_{\theta,L,\varepsilon,M})\ge -\dfrac{1}{\sin^2\theta},
		\]
		and the required lower bound for $E(\Tilde Q^{-1,1}_{\theta,L,\varepsilon,M})$ follows by \eqref{eq-qq}.

		For the upper bound  we will use the observation
		that the shifted domain $U_{\theta,L,\varepsilon,M}-(0,L)$ ``converges'' to the sector $S_\theta$ from Lemma~\ref{lem-sector} as $(\varepsilon,L)$ approaches $(0,\infty)$.
		Pick $\chi\in C^\infty(\RR)$ such that
		\[
		0\le\chi\le 1,\qquad \chi(t)=1 \text{ for all }t \ge -\frac{1}{2},
		\qquad 
		\chi(t)=0 \text{ for all }t\le -\frac{3}{4}.
		\]
		Define the functions
		\begin{align*}
			v:\,&\ S_\theta\ni (x,y)\mapsto \exp\Big(-\dfrac{\alpha y}{\sin\theta}\Big),\\
			v_L:\,&\  U_{\theta,L,\varepsilon,M}\ni(x,y)\mapsto v(x,y-L)\chi\Big(\dfrac{y-L}{L}\Big).
		\end{align*}
		Remark that the support of $v_L$ is contained in the set
		\[
		U_{\theta,L,\varepsilon,M}\cap \Big\{(x,y):\ y\ge \frac{L}{4}\Big\}
		\equiv \Big\{(x,y):\ \frac{L}{4}\le y < h_{\theta,L,\varepsilon}(x)\Big\},
		\]
		which is independent of $M$ and represents a shifted copy of a finite piece of $S_\theta$ with a rounded corner, and that $\|v_L\|_{L^2(U_{\theta,L,\varepsilon,M})}$ and $\Tilde q^{\,-1,1}_{\theta,L,\varepsilon,M}(v_L,v_L)$
		are independent of $M$ as well. A simple direct computation shows:
		\begin{align*}
			\iint_{U_{\theta,L,\varepsilon,M}} |\nabla v_L|^2\dd x\dd y&\xrightarrow{(\varepsilon,L)\to(0,\infty)}
			\iint_{S_\theta} |\nabla v|^2\dd x\dd y,\\
			\iint_{U_{\theta,L,\varepsilon,M}} |v_L|^2\dd x\dd y&\xrightarrow{(\varepsilon,L)\to(0,\infty)}
			\iint_{S_\theta} |v|^2\dd x\dd y,\\
			\int_{\Gamma_{\theta,L,\varepsilon,M}} |v_L|^2\dd \sigma&\xrightarrow{(\varepsilon,L)\to(0,\infty)}
			\int_{\partial S_\theta} |v|^2\dd \sigma,
		\end{align*}
		which yields
		\[
		\dfrac{\Tilde q^{\,-1,1}_{\theta,L,\varepsilon,M}(v_L,v_L)}{\|v_L\|^2_{L^2(U_{\theta,L,\varepsilon,M})}}
		\ \xrightarrow{(\varepsilon,L)\to(0,\infty)}\ \dfrac{\displaystyle\iint_{S_\theta} |\nabla v|^2\dd x\dd y-\int_{\partial S_\theta} |v|^2\dd \sigma}{\displaystyle\iint_{S_\theta} |v|^2\dd x\dd y}=-\dfrac{1}{\sin^2\theta},
		\]
		where the last equality holds by Lemma~\ref{lem-sector}.
		Therefore, for any $\delta>0$ one finds $\varepsilon_0>0$ and $L_0>0$ such that for all $(\varepsilon,L)\in(0,\varepsilon_0)\times (L_0,\infty)$ and all admissible $M$ we have
		\[
		E(\Tilde Q^{-1,1}_{\theta,L,\varepsilon,M})\le
		\dfrac{\Tilde q^{\,-1,1}_{\theta,L,\varepsilon,M}(v_L,v_L)}{\|v_L\|^2_{L^2(U_{\theta,L,\varepsilon,M})}}\le 
		-\dfrac{1}{\sin^2\theta}+\delta,
		\]
		and the required upper bound for $E(Q^{-1,1}_{\theta,L,\varepsilon,M})$ follows by \eqref{eq-qq}.
	\end{proof}

	\begin{lemma}\label{lem5}
		For any $\theta\in(0,\frac{\pi}{2})$, $L\in(0,\infty)$, $\varepsilon>0$
		and $\delta>0$ one can find $M\in (L\tan\theta,\infty)$ and $\alpha_0<0$
		such that for any $\alpha\in (\alpha_0,0)$, we have
		\[
		E(\Tilde Q^{\alpha,1}_{\theta,L,\varepsilon,M})\ge
		E(Q^{\alpha,1}_{\theta,L,\varepsilon,M})\ge -(1+\delta)\alpha^2.
		\]
	\end{lemma}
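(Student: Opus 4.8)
The first inequality $E(\Tilde Q^{\alpha,1}_{\theta,L,\varepsilon,M})\ge E(Q^{\alpha,1}_{\theta,L,\varepsilon,M})$ is \eqref{eq-qq}, so by the min-max principle everything reduces to choosing $M$ and $\alpha_0$ so that $q^{\alpha,1}_{\theta,L,\varepsilon,M}(u,u)\ge-(1+\delta)\alpha^2\|u\|^2_{L^2(U_{\theta,L,\varepsilon,M})}$ for every $u\in H^1(U_{\theta,L,\varepsilon,M})$ and every $\alpha\in(\alpha_0,0)$. Abbreviate $U:=U_{\theta,L,\varepsilon,M}$, $\Gamma:=\Gamma_{\theta,L,\varepsilon,M}$, $h:=h_{\theta,L,\varepsilon}$, $\ell:=L\tan\theta+\varepsilon$, and let $W:=\{(x,y):|x|<\ell,\ y<h(x)\}$ be the thin vertical column over the bump; recall that $h\equiv0$ (hence $h'\equiv0$) on $\{|x|\ge\ell\}$, that $0\le h\le L$, and that $|h'|\le\cot\theta$ by \eqref{eq-hbound}. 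The plan is a \emph{weighted vertical slicing}: fix $t:=(1+\tfrac{\delta}{2})^{-1}\in(0,1)$, write $\iint_U|\partial_yu|^2=(1-t)\iint_U|\partial_yu|^2+t\iint_U|\partial_yu|^2$, and apply Lemma~\ref{lem-halfline} on each slice $\{x\}\times(-\infty,h(x))$ to $y\mapsto u(x,y)$ with the rescaled parameter $\alpha\sqrt{1+h'(x)^2}/t<0$, the factor $\sqrt{1+h'(x)^2}$ accounting for the Jacobian of $\dd\sigma$ on $\Gamma$. Integrating in $x\in(-M,M)$ and bounding $\int_{-M}^M(1+h'(x)^2)\big(\int_{-\infty}^{h(x)}|u(x,y)|^2\dd y\big)\dd x\le\iint_U|u|^2+\cot^2\theta\iint_W|u|^2$ (as $h'$ is supported in $\{|x|<\ell\}$ with $|h'|\le\cot\theta$) should yield
\[
q^{\alpha,1}_{\theta,L,\varepsilon,M}(u,u)\ \ge\ (1-t)\iint_U|\nabla u|^2-\frac{\alpha^2}{t}\iint_U|u|^2-\frac{\alpha^2\cot^2\theta}{t}\iint_W|u|^2 ,
\]
where I kept only the fraction $(1-t)$ of $\iint_U|\nabla u|^2$ for later use.

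The core of the argument is to absorb the last term. The crude estimate $\iint_W|u|^2\le\iint_U|u|^2$ would merely reproduce the useless bound $E\ge-\alpha^2/\sin^2\theta$ (as in Lemma~\ref{lem4}); the right mechanism is that $W$ has width $2\ell$ whereas $U$ has width $2M$, so the $L^2$-mass of $u$ inside $W$ is a small fraction of the total once $M$ is large, up to a $\iint_U|\nabla u|^2$-correction. Concretely, I would establish that there exist $a_M$ with $a_M\to0$ as $M\to\infty$ (of order $\ell/M$) and $C_M$ depending on $\theta,L,\varepsilon,M$ such that
\[
\iint_W|u|^2\ \le\ a_M\iint_U|u|^2+C_M\iint_U|\nabla u|^2\qquad\text{for all }u\in H^1(U).
\]
For $W\cap\{y<0\}$ every horizontal slice of $U$ is the full interval $(-M,M)$, so the one-dimensional inequality $\int_{-\ell}^{\ell}|g|^2\le\frac{2\ell}{M}\int_{-M}^{M}|g|^2+c_M\int_{-M}^{M}|g'|^2$ (split $g$ into its mean over $(-M,M)$ plus the remainder and apply Poincaré--Wirtinger) gives the desired bound after integration in $y$, with $c_M$ depending on $M$ only. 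For the bounded \emph{attic} $W\cap\{y\ge0\}\subset(-\ell,\ell)\times[0,L]$, which has no wider strip beside it, I would write $u(x,y)=u(x,0)+\int_0^y\partial_yu(x,s)\dd s$ for $0\le y<h(x)$, control $|u(x,0)|^2$ by a vertical trace estimate over $(-1,0)$, and feed the resulting $\iint_{(-\ell,\ell)\times(-1,0)}|u|^2$ back into the $\{y<0\}$ estimate.

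It then remains to fix the parameters in the correct order. First pick $M>\ell$ so large that $\tfrac{\cot^2\theta}{t}\,a_M\le\tfrac{\delta}{2}$. With $M$, hence $C_M$, now fixed, pick $\alpha_0<0$ with $\alpha_0^2$ small enough that $\tfrac{\cot^2\theta}{t}\,\alpha_0^2\,C_M\le1-t$. Inserting the second display into the first, for every $\alpha\in(\alpha_0,0)$ the term $\tfrac{\alpha^2\cot^2\theta}{t}\iint_W|u|^2$ is bounded by $\tfrac{\delta}{2}\alpha^2\iint_U|u|^2+(1-t)\iint_U|\nabla u|^2$, so that
\[
q^{\alpha,1}_{\theta,L,\varepsilon,M}(u,u)\ \ge\ -\Big(\frac{1}{t}+\frac{\delta}{2}\Big)\alpha^2\iint_U|u|^2\ =\ -(1+\delta)\,\alpha^2\,\|u\|^2_{L^2(U)} ,
\]
and the bound on $E(Q^{\alpha,1}_{\theta,L,\varepsilon,M})$ follows by min-max. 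The step I expect to be most delicate is the attic: it is a bounded portion of the domain touching the Robin part $\Gamma$, and its $L^2$-mass has to be estimated without ever dividing by $\alpha^2$, which is exactly why a positive fraction $(1-t)$ of $\iint_U|\nabla u|^2$ must be held in reserve at the first step instead of being spent in Lemma~\ref{lem-halfline}.
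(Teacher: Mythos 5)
Your proposal is correct, but it takes a genuinely different route from the paper's proof. Both proofs start identically: both reserve a fixed positive fraction of the kinetic energy before applying Lemma~\ref{lem-halfline} slice-wise (your $1-t$ is the paper's $1-\rho$), and both then reduce everything to showing that the ``excess potential'' $|h'_{\theta,L,\varepsilon}(x)|^2$ --- supported on the column of width $2(L\tan\theta+\varepsilon)$ over the bump --- contributes only an $O(1/M)$-fraction of the total $L^2$ mass once $M$ is large. The divergence is in how this smallness is exploited. The paper performs a Dirichlet--Neumann bracketing at $y=-M$ to split $U_{\theta,L,\varepsilon,M}$ into a bounded part $U'$ and a half-strip $U''$, and on each piece invokes Kato's first-order analytic perturbation theory in the parameter $\alpha^2$: the unperturbed operator has a simple zero ground state with constant eigenfunction, and the first-order coefficient is the average of the potential $1+|h'|^2$ over the piece, which tends to $1$ as $M\to\infty$. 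Your argument replaces this spectral machinery with an explicit functional inequality $\iint_W|u|^2\le a_M\iint_U|u|^2+C_M\iint_U|\nabla u|^2$, $a_M=O(1/M)$, proved by Poincar\'e--Wirtinger on horizontal slices in the lower half-strip and a trace estimate for the attic; the gradient term on the right is then absorbed by the $(1-t)\iint_U|\nabla u|^2$ you held back, after shrinking $|\alpha_0|$. In effect you are doing ``first-order perturbation theory by hand'': Poincar\'e--Wirtinger plays the role of the spectral gap above the constant ground state. Your route is more elementary (no Kato, no operator decomposition, no remark about compactness of the resolvent on $U'$ or separation of variables on $U''$) at the cost of somewhat more hands-on estimates; the paper's route is shorter once one grants the perturbation-theoretic input. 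One small point worth making explicit if you write this up: you implicitly assume $M>L\tan\theta+\varepsilon$ (so that the bump column sits strictly inside $(-M,M)$), which is harmless since $M$ is sent to infinity, but is a slightly stronger restriction than the $M>L\tan\theta$ in the lemma's statement.
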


	\begin{proof}
		The left-hand inequality holds by \eqref{eq-qq}, so only the right-hand one must be shown. Let us choose some admissible $(\theta,L,\varepsilon,M)$ and any $\delta>0$. Pick some $\rho\in (0,1)$: its precise value will be chosen later. For any $\alpha\le 0$ and any $u\in\cD(q^{\alpha,1}_{\theta,L,\varepsilon,M})$ we estimate
		\begin{align*}
			q^{\alpha,1}_{\theta,L,\varepsilon,M}(u,u)&=\iint_{U_{\theta,L,\varepsilon,M}}|\nabla u|^2\dd x\dd y+\alpha\int_{\Gamma_{\theta,L,\varepsilon,M}}|u|^2\dd \sigma\\
			&=\iint_{U_{\theta,L,\varepsilon,M}}\big(|\partial_x u|^2+\rho |\partial_y u|^2\big)\dd x\dd y\\
			&\quad+(1-\rho)	
			\int_{-M}^M\bigg[
			\int_{-\infty}^{h_{\theta,L,\varepsilon}(x)} \big|\partial_y u(x,y)\big|^2\dd y\\
			&\qquad\qquad
			+\dfrac{\alpha}{1-\rho}\sqrt{1+\big|h'_{\theta,L,\varepsilon}(x)\big|^2} \,\Big|u\big(x,h_{\theta,L,\varepsilon}(x)\big)\Big|^2
			\bigg]\dd x.
		\end{align*}
		We estimate the term in the square brackets using Lemma \ref{lem-halfline}:
		\begin{multline*}
			\int_{-\infty}^{h_{\theta,L,\varepsilon}(x)} \big|\partial_y u(x,y)\big|^2\dd y
			+\dfrac{\alpha}{1-\rho}\sqrt{1+\big|h'_{\theta,L,\varepsilon}(x)\big|^2} \,\Big|u\big(x,h_{\theta,L,\varepsilon}(x)\big)\Big|^2\\
			\ge 
			-\dfrac{\alpha^2}{(1-\rho)^2}w_{\theta,L,\varepsilon}(x) \int_{-\infty}^{h_{\theta,L,\varepsilon}(x)} \big|u(x,y)\big|^2\dd y
		\end{multline*}
		with 
		\begin{equation}\label{eq:wdef}
		w_{\theta,L,\varepsilon}(x):=1+\big|h'_{\theta,L,\varepsilon}(x)\big|^2,
		\end{equation}
		which yields
		\begin{equation}
			\label{eq-est01}
			q^{\alpha,1}_{\theta,L,\varepsilon,M}(u,u)\ge 	\iint_{U_{\theta,L,\varepsilon,M}}\Big(|\partial_x u|^2+\rho |\partial_y u|^2 -\dfrac{\alpha^2}{1-\rho}W_{\theta,L,\varepsilon}|u|^2\Big)\dd x\dd y
		\end{equation}	
		with $W_{\theta,L,\varepsilon}(x,y):=w_{\theta,L,\varepsilon}(x)$.
		
			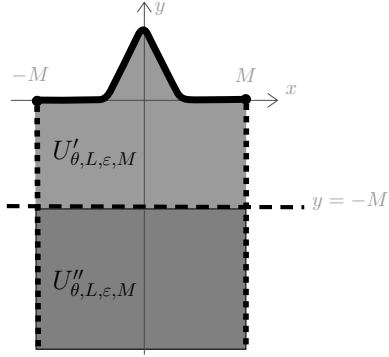
\begin{figure}
			\centering
			\scalebox{0.7}{\begin{tikzpicture}[x=0.75pt,y=0.75pt,yscale=-1,xscale=1]
					
					\draw  [color={rgb, 255:red, 155; green, 155; blue, 155 }  ,draw opacity=1 ][fill={rgb, 255:red, 155; green, 155; blue, 155 }  ,fill opacity=1 ][line width=2.25]  (250.03,77.89) .. controls (234.96,78.16) and (212.36,78.44) .. (209.9,78.16) .. controls (207.43,77.89) and (203.87,77.89) .. (201.13,72.96) .. controls (198.39,68.03) and (182.5,36.25) .. (180.04,31.04) .. controls (177.57,25.84) and (174.83,28.31) .. (173.19,31.87) .. controls (171.55,35.43) and (155.38,68.58) .. (153.74,71.59) .. controls (152.09,74.6) and (149.9,78.16) .. (145.25,78.16) .. controls (140.59,78.16) and (125.93,78.44) .. (102.37,78.44) .. controls (102.32,107.7) and (103.42,226.71) .. (103.2,246) .. controls (131.47,265.51) and (227.43,233.09) .. (250,250.4) .. controls (249.56,224.32) and (250.03,106.16) .. (250.03,77.89) -- cycle ;
					\draw  [draw opacity=0][fill={rgb, 255:red, 128; green, 128; blue, 128 }  ,fill opacity=1 ] (101.6,156) -- (251.2,156) -- (251.2,256.8) -- (101.6,256.8) -- cycle ;
					\draw [color={rgb, 255:red, 74; green, 74; blue, 74 }  ,draw opacity=1 ] (83.8,78) -- (273.7,78)(178.8,7.6) -- (178.8,260.8) (266.7,73) -- (273.7,78) -- (266.7,83) (173.8,14.6) -- (178.8,7.6) -- (183.8,14.6)  ;
					\draw [color={rgb, 255:red, 0; green, 0; blue, 0 }  ,draw opacity=1 ][line width=3.75]    (251.13,77.23) .. controls (236.06,77.51) and (213.46,77.78) .. (210.99,77.51) .. controls (208.53,77.23) and (204.97,77.23) .. (202.23,72.3) .. controls (199.49,67.37) and (183.6,35.59) .. (181.13,30.39) .. controls (178.67,25.18) and (175.93,27.65) .. (174.28,31.21) .. controls (172.64,34.77) and (156.48,67.92) .. (154.83,70.93) .. controls (153.19,73.94) and (151,77.51) .. (146.34,77.51) .. controls (141.68,77.51) and (128.18,78.11) .. (103.47,77.78) ;
					\draw [line width=3]  [dash pattern={on 3.38pt off 3.27pt}]  (102.37,81.79) -- (103.2,258.8) ;
					\draw  [fill={rgb, 255:red, 0; green, 0; blue, 0 }  ,fill opacity=1 ] (99.02,78.44) .. controls (99.02,76.58) and (100.52,75.08) .. (102.37,75.08) .. controls (104.23,75.08) and (105.73,76.58) .. (105.73,78.44) .. controls (105.73,80.29) and (104.23,81.79) .. (102.37,81.79) .. controls (100.52,81.79) and (99.02,80.29) .. (99.02,78.44) -- cycle ;
					\draw [line width=3]  [dash pattern={on 3.38pt off 3.27pt}]  (251.53,80.43) -- (251.2,256.8) ;
					\draw  [fill={rgb, 255:red, 0; green, 0; blue, 0 }  ,fill opacity=1 ] (247.77,77.23) .. controls (247.77,75.38) and (249.27,73.88) .. (251.13,73.88) .. controls (252.98,73.88) and (254.48,75.38) .. (254.48,77.23) .. controls (254.48,79.09) and (252.98,80.59) .. (251.13,80.59) .. controls (249.27,80.59) and (247.77,79.09) .. (247.77,77.23) -- cycle ;
					\draw [line width=2.25]  [dash pattern={on 6.75pt off 4.5pt}]  (80.8,154) -- (292.8,154.8) ;
					
					\draw (278.07,65.67) node [anchor=north west][inner sep=0.75pt]  [color={rgb, 255:red, 155; green, 155; blue, 155 }  ,opacity=1 ]  {$x$};
					\draw (185.09,5.92) node [anchor=north west][inner sep=0.75pt]  [color={rgb, 255:red, 155; green, 155; blue, 155 }  ,opacity=1 ]  {$y$};
					\draw (112,104.2) node [anchor=north west][inner sep=0.75pt]  [font=\Large]  {$U'_{\theta ,L,\varepsilon ,M}$};
					\draw (82.74,52.55) node [anchor=north west][inner sep=0.75pt]  [color={rgb, 255:red, 155; green, 155; blue, 155 }  ,opacity=1 ]  {$-M$};
					\draw (242.07,53.33) node [anchor=north west][inner sep=0.75pt]  [color={rgb, 255:red, 155; green, 155; blue, 155 }  ,opacity=1 ]  {$M$};
					\draw (297.27,142.87) node [anchor=north west][inner sep=0.75pt]  [color={rgb, 255:red, 155; green, 155; blue, 155 }  ,opacity=1 ]  {$y=-M$};
					\draw (112,198.2) node [anchor=north west][inner sep=0.75pt]  [font=\Large]  {$U''_{\theta ,L,\varepsilon ,M}$};

			\end{tikzpicture}}
			\caption{The subdomains $U'_{\theta,L,\varepsilon,M}$ and $U''_{\theta,L,\varepsilon,M}$ of $U_{\theta,L,\varepsilon,M}$.}\label{fig:uprime}
		\end{figure}
		
		Decompose $U_{\theta,L,\varepsilon,M}$ into two parts,
		\begin{align*}
			U'_{\theta,L,\varepsilon,M}&:=U_{\theta,L,\varepsilon,M}\cap\big\{(x,y):\, y>-M\big\},\\
			U''_{\theta,L,\varepsilon,M}&:=U_{\theta,L,\varepsilon,M}\cap\big\{(x,y):\, y<-M\big\},
		\end{align*}
		see Fig.~\ref{fig:uprime},	and consider the hermitian sesquilinear form given by
		\[
		{q'}^{\alpha,1,\rho}_{\theta,L,\varepsilon,M}(u,u):=\iint_{U'_{\theta,L,\varepsilon,M}}\Big(|\partial_x u|^2+\rho |\partial_y u|^2 -\dfrac{\alpha^2}{1-\rho}W_{\theta,L,\varepsilon}|u|^2\Big)\dd x\dd y
		\]
		with $\cD({q'}^{\alpha,1,\rho}_{\theta,L,\varepsilon,M}):=H^1(U'_{\theta,L,\varepsilon,M})$, which induces
		a self-adjoint operator ${Q'}^{\alpha,1,\rho}_{\theta,L,\varepsilon,M}$ in $L^2(U'_{\theta,L,\varepsilon,M})$,
		and the hermitian sesquilinear form defined by
		\[
		{q''}^{\alpha,1,\rho}_{\theta,L,\varepsilon,M}(u,u):=\iint_{U''_{\theta,L,\varepsilon,M}}\Big(|\partial_x u|^2+\rho |\partial_y u|^2 -\dfrac{\alpha^2}{1-\rho}W_{\theta,L,\varepsilon}|u|^2\Big)\dd x\dd y
		\]
		with $\cD({q''}^{\alpha,1,\rho}_{\theta,L,\varepsilon,M}):=H^1(U''_{\theta,L,\varepsilon,M})$, which generates
		a self-adjoint operator ${Q''}^{\alpha,1,\rho}_{\theta,L,\varepsilon,M}$ in $L^2(U''_{\theta,L,\varepsilon,M})$.
		Then Eq. \eqref{eq-est01} reads as
		\begin{align*}
			q^{\alpha,1}_{\theta,L,\varepsilon,M}(u,u)&\ge {q'}^{\alpha,1,\rho}_{\theta,L,\varepsilon,M}(u|_{U'_{\theta,L,\varepsilon,M}},u|_{U'_{\theta,L,\varepsilon,M}})\\
			&\quad +{q''}^{\alpha,1,\rho}_{\theta,L,\varepsilon,M}(u|_{U''_{\theta,L,\varepsilon,M}},u|_{U''_{\theta,L,\varepsilon,M}})
		\end{align*}
		for any $u\in\cD(q^{\alpha,1}_{\theta,L,\varepsilon,M})$, and the min-max principle implies
		\begin{equation}
			\label{eq-infee}
			\begin{aligned}
				E(Q^{\alpha,1}_{\theta,L,\varepsilon,M})&\ge E({Q'}^{\alpha,1,\rho}_{\theta,L,\varepsilon,M}\oplus {Q''}^{\alpha,1,\rho}_{\theta,L,\varepsilon,M})\\
				&\equiv \min\Big\{
				E({Q'}^{\alpha,1,\rho}_{\theta,L,\varepsilon,M}), E({Q''}^{\alpha,1,\rho}_{\theta,L,\varepsilon,M})\Big\}.
			\end{aligned}
		\end{equation}
		
		Remark that $U'_{\theta,L,\varepsilon,M}$ is a bounded Lipschitz domain, so the operator ${Q'}^{\alpha,1,\rho}_{\theta,L,\varepsilon,M}$ has compact resolvent and $E({Q'}^{\alpha,1,\rho}_{\theta,L,\varepsilon,M})$ is simply its lowest eigenvalue.
		Further remark that ${Q'}^{\alpha,1,\rho}_{\theta,L,\varepsilon,M}$ is a type (B) analytic family with respect to $\alpha^2$, see \cite[Chap.~7.4]{kato} and the ``unperturbed''
		operator ${Q'}^{0,1,\rho}_{\theta,L,\varepsilon,M}$ corresponding to $\alpha^2=0$
		is defined by the hermitian sesquilinear form
		\[
		{q'}^{0,1,\rho}_{\theta,L,\varepsilon,M}(u,u):=\iint_{U'_{\theta,L,\varepsilon,M}}\Big(|\partial_x u|^2+\rho |\partial_y u|^2\Big)\dd x\dd y,
		\]
		so its lowest eigenvalue is $0$, and it is simple with
		$v:=|U'_{\theta,L,\varepsilon,M}|^{-\frac{1}{2}}$ being an associated normalized eigenfunction. The first-order perturbation theory, see \cite[Sec.~7.4.6]{kato} shows
		that for $\alpha\to 0^-$ one has
		\begin{gather}
				E({Q'}^{\alpha,1,\rho}_{\theta,L,\varepsilon,M})=-\dfrac{\alpha^2}{1-\rho} B'_M+O(\alpha^4),
			\label{eq-ee1}\\
		B'_M:=\iint_{U'_{\theta,L,\varepsilon,M}} W_{\theta,L,\varepsilon} |v|^2\dd x \dd y\equiv \dfrac{1}{|U'_{\theta,L,\varepsilon,M}|} \iint_{U'_{\theta,L,\varepsilon,M}} W_{\theta,L,\varepsilon}\dd x\dd y.\nonumber
		\end{gather}
		
		Recall that $\|W_{\theta,L,\varepsilon}\|_\infty\le 1+\cot^2\theta$,
		see \eqref{eq-hbound} and \eqref{eq:wdef}, and that $W_{\theta,L,\varepsilon}(x,y)=1$ for $|x|>L\tan\theta+\varepsilon$.
		Due to the inclusion
		\[
		U'_{\theta,L,\varepsilon,M}\cap\big\{(x,y):\,|x|\le L\tan\theta+\varepsilon\big\}\subset(-L\tan\theta-\varepsilon,L\tan\theta+\varepsilon)\times(-M,L),
		\]
		we obtain the upper bound
		\[
		\iint_{U'_{\theta,L,\varepsilon,M}} W_{\theta,L,\varepsilon}\dd x\dd y\le |U'_{\theta,L,\varepsilon,M}| +2(L\tan\theta+\varepsilon)(L+M)\cot^2\theta.
		\]
		Hence, using $|U'_{\theta,L,\varepsilon,M}|\ge 2M^2$ one obtains
		\begin{align*}
			B'_M&\le \dfrac{|U'_{\theta,L,\varepsilon,M}| +2(L\tan\theta+\varepsilon)(L+M)\cot^2\theta}{|U'_{\theta,L,\varepsilon,M}|}\\
			&\le 1+\dfrac{2(L\tan\theta+\varepsilon)(L+M)\cot^2\theta}{|U'_{\theta,L,\varepsilon,M}|}
			= 1+\dfrac{(L\tan\theta+\varepsilon)(L+M)\cot^2\theta}{M^2}.
		\end{align*}
		In particular, we may chose a sufficiently large $M'_\rho>0$ such that
		$B'_M\le 1+\rho$ for any $M>M'_\rho$. Using \eqref{eq-ee1} for any $M>M'_\rho$ we obtain
		\[
		E({Q'}^{\alpha,1,\rho}_{\theta,L,\varepsilon,M})\ge -\dfrac{1+\rho}{1-\rho}\alpha^2+O(\alpha^4) \text{ for }\alpha\to 0^-,
		\]
		and for suitable small $\alpha'(M)<0$ we have then
		\begin{equation}
			\label{eq-ee2}
			E({Q'}^{\alpha,1,\rho}_{\theta,L,\varepsilon,M})\ge -\Big(\dfrac{1+\rho}{1-\rho}+\rho\Big)\alpha^2
			\text{ for all } M>M'_\rho \text{ and } \alpha\in\big(\alpha'(M),0\big).
		\end{equation}

		To analyze $E({Q''}^{\alpha,1,\rho}_{\theta,L,\varepsilon,M})$ we note that 
	$U''_{\theta,L,\varepsilon,M}\equiv (-M,M)\times(-\infty,-M)$
		only depends on $M$, and the expression
		\begin{align*}
			{q''}^{\alpha,1,\rho}_{\theta,L,\varepsilon,M}(u,u)&=\int_{-M}^M\int_{-\infty}^M\Big(|\partial_x u(x,y)|^2+\rho |\partial_y u(x,y)|^2\\
			&\quad -\dfrac{\alpha^2}{1-\rho}w_{\theta,L,\varepsilon}(x)|u(x,y)|^2\Big)\dd y\dd x,
		\end{align*}
		shows that the operator ${Q''}^{\alpha,1,\rho}_{\theta,L,\varepsilon,M}$ admits a separation of variables,
		\[
		{Q''}^{\alpha,1,\rho}_{\theta,L,\varepsilon,M}=A^{\alpha,\rho}_{\theta,L,\varepsilon,M}\otimes 1 +1\otimes \rho N_M,
		\]
		where $A^{\alpha,\rho}_{\theta,L,\varepsilon,\rho}$ is the operator
		\[
		f\mapsto -f''-\dfrac{\alpha^2}{1-\rho} w_{\theta,L,\varepsilon} f
		\]
		in $L^2(-M,M)$ with Neumann boundary conditions and $N_M$ is the operator $g\mapsto-g''$ in $L^2(-\infty,-M)$
		with Neumann boundary condition. In particular,
		\[
		E({Q''}^{\alpha,1,\rho}_{\theta,L,\varepsilon,M})=E(A^{\alpha,\rho}_{\theta,L,\varepsilon,\rho})+E(\rho N_M)
		=E(A^{\alpha,\rho}_{\theta,L,\varepsilon,\rho}).
		\]
		We use again that $A^{\alpha,\rho}_{\theta,L,\varepsilon,\rho}$ is a type (B) analytic family with respect to $\alpha^2$. The ``unperturbed'' operator $A^{0,\rho}_{\theta,L,\varepsilon,\rho}$ corresponding to $\alpha=0$
		has the lowest eigenvalue zero with eigensubspace spanned by the normalized eigenfunction $\Tilde v:=(2M)^{-\frac{1}{2}}$, and the first-order perturbation theory implies
		\begin{align*}
			E(A^{\alpha,\rho}_{\theta,L,\varepsilon,\rho})&=-\dfrac{\alpha^2}{1-\rho}B''_M+O(\alpha^4) \text{ for }\alpha\to 0^-,\\
			B''_M&:=\int_{-M}^M w_{\theta,L,\varepsilon}|\Tilde v|^2\dd x=\dfrac{1}{2M}\int_{-M}^M w_{\theta,L,\varepsilon}(x)\dd x.
		\end{align*}
		Using $\|w_{\theta,L,\varepsilon}\|_\infty\le 1+\cot^2\theta$,
		see \eqref{eq-hbound}, and $w_{\theta,L,\varepsilon}(x)=1$ for $|x|>L\tan\theta+\varepsilon$
		we estimate
		\begin{gather*}
			\int_{-M}^M w_{\theta,L,\varepsilon}(x)\dd x\le 2M+2(L\tan\theta+\varepsilon)\cot^2\theta,\\
			B''_M\le 1+\dfrac{(L\tan\theta+\varepsilon)\cot^2\theta}{M},
		\end{gather*}
		so we can choose a sufficiently large $M''_\rho>0$ such that for any $M>M''_\rho$ we get
		\[
		E(A^{\alpha,\rho}_{\theta,L,\varepsilon,\rho})\ge -\dfrac{1+\rho}{1-\rho}\alpha^2+O(\alpha^4) \text{ for }\alpha\to 0^-.
		\]
		Then for suitable small $\alpha''(M)<0$, we obtain
		\begin{equation}
			\label{eq-ee3}
			\begin{aligned}E({Q''}^{\alpha,1,\rho}_{\theta,L,\varepsilon,M})&=E(A^{\alpha,\rho}_{\theta,L,\varepsilon,\rho})\ge -\Big(\dfrac{1+\rho}{1-\rho}+\rho\Big)\alpha^2\\
				&\text{for all } M>M''_\rho,\ \alpha\in\big(\alpha''(M),0\big).
			\end{aligned}
		\end{equation}
		
		Now we choose $\rho$ sufficiently small to have the inequality
		\[
		\dfrac{1+\rho}{1-\rho}+\rho<1+\delta,
		\]
		then pick any $M>\max\{M'_\rho,M''_\rho\}$
		and set $\alpha_0=\max\big\{\alpha'(M),\alpha''(M)\big\}$.
		For all $\alpha\in(\alpha_0,0)$ one can use both \eqref{eq-ee2} and \eqref{eq-ee3}:
		\[
		E({Q'}^{\alpha,1,\rho}_{\theta,L,\varepsilon,M})\ge -(1+\delta)\alpha^2,\quad
		E({Q''}^{\alpha,1,\rho}_{\theta,L,\varepsilon,M})\ge -(1+\delta)\alpha^2,
		\]
		and the substitution into \eqref{eq-infee} finishes the proof.
	\end{proof}

	\begin{lemma}\label{lem6}
		For any $\theta\in(0,\frac{\pi}{2})$, $L\in(0,\infty)$, $\varepsilon>0$, $M\in (L\tan\theta,\infty)$
		and $\delta>0$ there is $\alpha_1<0$ such that for all $\alpha<\alpha_1$, we have
		\[
		E(\Tilde Q^{\alpha,1}_{\theta,L,\varepsilon,M,1})\ge 
		E(Q^{\alpha,1}_{\theta,L,\varepsilon,M,1})\ge -(1+\delta)\alpha^2.
		\]
	\end{lemma}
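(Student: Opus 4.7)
The plan is to combine the scaling identity \eqref{eq-scale} with Lemma~\ref{lem5}. Setting $t:=|\alpha|>0$ and observing that $t\,U_{\theta,L,\varepsilon,M}=U_{\theta,tL,t\varepsilon,tM}$ (an immediate consequence of the homogeneity $h_{\theta,tL,t\varepsilon}(\cdot)=t\,h_{\theta,L,\varepsilon}(\cdot/t)$, itself following from the definition of $h_{\theta,L}$ and the scaling of the mollifier $\varphi_\varepsilon$), the identity \eqref{eq-scale} becomes
\[
E(Q^{\alpha,1}_{\theta,L,\varepsilon,M})=\alpha^2\,E(Q^{-1,1}_{\theta,tL,t\varepsilon,tM}),
\]
and similarly for $\Tilde Q$. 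The asserted bound is therefore equivalent to
\[
E(Q^{-1,1}_{\theta,tL,t\varepsilon,tM})\ge -(1+\delta)\quad\text{for all sufficiently large }t,
\]
together with the analogous inequality for $\Tilde Q$.

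To establish this, I would apply Lemma~\ref{lem5} to the rescaled parameters $(\theta,tL,t\varepsilon,\delta)$ at the Robin value $\tilde\alpha=-1$ with admissible parameter $\tilde M:=tM$. A direct change of variables $(x,y)\mapsto(x/t,y/t)$ shows that the dimensionless ratios $B'_M$ and $B''_M$ appearing in Lemma~\ref{lem5}'s proof are invariant under the simultaneous rescaling $(L,\varepsilon,M)\mapsto(tL,t\varepsilon,tM)$: one has $B'_{tM}(\theta,tL,t\varepsilon)=B'_M(\theta,L,\varepsilon)$, and likewise for $B''$. Consequently, once the parameter $\rho$ is chosen small enough that $(1+\rho)/(1-\rho)+\rho<1+\delta$, and (if necessary) after first enlarging $M$ so that the admissibility bounds $B'_M,B''_M\le 1+\rho$ hold for the original data, the admissibility of $\tilde M=tM$ at the rescaled scale is automatic and uniform in $t$.

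The principal technical obstacle is to secure that $\tilde\alpha=-1$ lies within the interval $(\alpha_0,0)$ of validity produced by Lemma~\ref{lem5} for the rescaled data: the $O(\tilde\alpha^4)$ remainders in the Kato-type first-order perturbation expansions of $E({Q'}^{\tilde\alpha,1,\rho})$ and $E(A^{\tilde\alpha,\rho})$ have constants involving the inverse of the spectral gap of the unperturbed Neumann Laplacian on the bounded piece $tU'$, and this gap shrinks like $t^{-2}$. To overcome this, I would replace the perturbation-theoretic step in Lemma~\ref{lem5}'s proof by a direct variational bound: on the bounded piece $tU'$ one tests the ground state of ${Q'}^{-1,1,\rho}$ against the explicit constant trial function, whose Rayleigh quotient equals $-B'/(1-\rho)$ and is, by the scale invariance of $B'$, uniform in $t$; the one-dimensional Schr\"odinger operator $A^{-1,\rho}$ on the strip-like tail $tU''$ is handled by an analogous direct estimate relying on the scale invariance of $B''$. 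Combining these uniform-in-$t$ estimates with the min--max decomposition \eqref{eq-infee} then yields the required bound $E(Q^{-1,1}_{\theta,tL,t\varepsilon,tM})\ge -(1+\delta)$, which unscaling gives the conclusion of the lemma.
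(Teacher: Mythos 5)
The scaling reduction you begin with is sound: the identity $t\,U_{\theta,L,\varepsilon,M}=U_{\theta,tL,t\varepsilon,tM}$ does hold (it follows from the homogeneity of $h_{\theta,L}$ and of the mollifier), and combined with \eqref{eq-scale} it correctly recasts the claim as proving $E(Q^{-1,1}_{\theta,tL,t\varepsilon,tM})\ge -(1+\delta)$ for all sufficiently large $t$. You also correctly identify the obstruction to simply quoting Lemma~\ref{lem5}: the analytic perturbation expansion in $\alpha^2$ has a remainder whose size is governed by the spectral gap of the unperturbed Neumann operator on the bounded piece, and that gap degenerates like $t^{-2}$, so the interval $(\alpha_0(tM),0)$ of validity produced by Lemma~\ref{lem5} has no reason to contain $\alpha=-1$ uniformly in $t$.

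However, the fix you propose does not work, and this is a genuine gap. Plugging the constant trial function into the Rayleigh quotient of ${Q'}^{-1,1,\rho}$ gives only an \emph{upper} bound on $E({Q'}^{-1,1,\rho})$, namely $E({Q'}^{-1,1,\rho})\le -B'/(1-\rho)$, whereas what is needed here is a \emph{lower} bound. In Lemma~\ref{lem5} the first-order perturbation theory is used precisely because it yields a two-sided estimate with a controlled error; a single test function never produces the lower-sided estimate. Moreover, the obvious lower bound on the ground state of ${Q'}^{-1,1,\rho}$, obtained by bounding the potential $-\tfrac{1}{1-\rho}W_{\theta,tL,t\varepsilon}$ from below by $-\tfrac{1}{(1-\rho)\sin^2\theta}$, only gives $E\ge -\tfrac{1}{(1-\rho)\sin^2\theta}$, which is strictly below $-(1+\delta)$ and hence useless. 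Controlling the regime $\alpha\to-\infty$ (equivalently $t\to\infty$ at fixed $\alpha=-1$) requires a different mechanism, because the ground state concentrates near the Robin boundary and not near the constant function; the approach of Lemma~\ref{lem5}, which linearizes around the constant eigenfunction at $\alpha=0$, is intrinsically a small-$\alpha$ technique. The paper's proof of Lemma~\ref{lem6} therefore proceeds entirely differently: it passes to Fermi coordinates $(s,t)$ in a tubular neighborhood $U'_a$ of the smooth curve $\Gamma_{\theta,L,\varepsilon,M}$, drops the transversal derivative on the complement, and invokes the one-dimensional asymptotics $\lambda(\alpha,s)=-\alpha^2+k(s)\alpha+O(\log|\alpha|)$ from \cite[Lem.~5.1]{kop1}, which, being uniform in $s$ because the curvature $k$ is bounded, yields $q^{\alpha,1}_{\theta,L,\varepsilon,M}(u,u)\ge(-\alpha^2+C\alpha)\|u\|^2$ and hence the claim for $\alpha$ sufficiently negative. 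To salvage your scaling strategy you would need a replacement for the perturbation step that gives a lower bound uniformly in $t$; this is essentially equivalent to redoing the tubular-neighborhood estimate in the rescaled picture, so the reduction does not save any work.
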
	
	
	\begin{proof}
		Only the right-hand inequality must be proved, as the left-hand one holds by \eqref{eq-qq}.
		Pick any admissible $(\theta,L,\varepsilon,M)$ and any $\delta>0$. Let $\gamma:(0,\ell)\mapsto \RR^2$ be an arc-length parametrization of $\Gamma_{\theta,L,\varepsilon,M}$
		and denote by $\nu(s)$ the outer unit normal at the point $\gamma(s)$, then the curvature $k(s)$ 
		of $\Gamma_{\theta,L,\varepsilon,M}$ at $\gamma(s)$ is defined by
		$\nu'(s)=k(s)\gamma'(s)$. Remark that $k$ has compact support in our case. For $a>0$ denote
		\[
		U'_a:=\big\{(x,y)\in U_{\theta,L,\varepsilon,M}: \ \text{dist}\big((x,y),\Gamma_{\theta,L,\varepsilon,M}\big)<a\big\}
		\]
		and assume that $a$ is sufficiently small such that the map
		\[
		\Phi:\ (0,\ell)\times (0,a)\ \ni (s,t)\mapsto \gamma(s)-t\nu(s) \in U'_a
		\]
		is a diffeomorphism and $a\|k\|_\infty<1$. For any function $u\in H^1(U'_a)$ denote $v:=u\circ \Phi$,	then a standard computation shows the identities
		\begin{align*}
			\iint_{U'_a} |u|^2\dd x\dd y&=\iint_{(0,\ell)\times(0,a)} \big|v(s,t)\big|^2\big(1-tk(s)\big)\dd s\dd t,\\
			\iint_{U'_a} |\nabla u|^2\dd x\dd y&=\iint_{(0,\ell)\times(0,a)} \Big[\dfrac{1}{1-t k(s)}\big|\partial_s v(s,t)\big|^2\\
			&\quad +\big(1-tk(s)\big)\big|\partial_t v(s,t)\big|^2\Big]\dd s\dd t,\\
			\dint_{\Gamma_{\theta,L,\varepsilon,M}} |u|^2\dd \sigma&=\dint_0^\ell  \big| v(s,0)\big|^2\dd s.
		\end{align*}
		Therefore, for any $u\in \cD(q^{\alpha,1}_{\theta,L,\varepsilon,M})$ one has
		\begin{equation}
			\label{eq-est00}
			\begin{aligned}
				q^{\alpha,1}_{\theta,L,\varepsilon,M}(u,u)&=\iint_{U_{\theta,L,\varepsilon,M}}|\nabla u|^2\dd x\dd y+\alpha\int_{\Gamma_{\theta,L,\varepsilon,M}}|u|^2\dd \sigma\\
				&\ge \iint_{U'_a}|\nabla u|^2\dd x\dd y+\alpha\int_{\Gamma_{\theta,L,\varepsilon,M}}|u|^2\dd \sigma\\
				&\ge \int_0^\ell \bigg[
				\int_0^a \big(1-tk(s)\big)\big|\partial_t v(s,t)\big|^2\dd t+\alpha\big| v(s,0)\big|^2\bigg]\dd s.
			\end{aligned}
		\end{equation}
		
		In \cite[Lem.~5.1]{kop1} it was shown that for
		\[
		\lambda(\alpha,s):=\inf_{f\in H^1(0,a),\, f\ne 0}\dfrac{\dint_0^a \big(1-tk(s)\big)\big|f'(t)\big|^2\dd t+\alpha\big|f(0)\big|^2}{\dint_0^a \big(1-tk(s)\big)\big|f(t)\big|^2\dd t}
		\]
		it holds that $\lambda(\alpha,s)=-\alpha^2+k(s)\alpha+O(\log|\alpha|)$ for $\alpha\to-\infty$,
		and the remainder estimate is uniform in $s$. Therefore, for suitable $C>0$ and $\alpha_*<0$ one
		has $\lambda(\alpha,s)\ge -\alpha^2+C\alpha$ for all $s\in (0,\ell)$ and all $\alpha<\alpha_*$,
		and without loss of generality we additionally assume that $-\alpha^2+C\alpha<0$ for all these $(\alpha,s)$.
		For the same $(\alpha,s)$ the last expression in \eqref{eq-est00} is estimated from below as
		\begin{align*}
			\int_0^\ell
			\int_0^a\bigg[ \big(1-tk(s)\big)\big|&\partial_t v(s,t)\big|^2\dd t+\alpha\big| v(s,0)\big|^2\bigg]\dd s\\
			&\ge \dint_0^\ell\lambda(\alpha,s)\dint_0^a \big(1-tk(s)\big)\big|v(s,t)\big|^2\dd t\dd s\\
			&\ge (-\alpha^2+C\alpha) \dint_0^\ell\dint_0^a \big(1-tk(s)\big)\big|v(s,t)\big|^2\dd t\dd s\\
			&= (-\alpha^2+C\alpha)\iint_{U'_a}|u|^2\dd x\dd y.
		\end{align*}
		Therefore, for all $u\in \cD(q^{\alpha,1}_{\theta,L,\varepsilon,M})$ and all $\alpha<\alpha_*$ we have
		\begin{align*}
			q^{\alpha,1}_{\theta,L,\varepsilon,M}(u,u)&\ge (-\alpha^2+C\alpha)\iint_{U'_a}|u|^2\dd x\dd y\\
			&\ge (-\alpha^2+C\alpha)\iint_{U_{\theta,L,\varepsilon,M}}|u|^2\dd x\dd y,
		\end{align*}
		which yields $E(Q^{\alpha,1}_{\theta,L,\varepsilon,M})\ge -\alpha^2+C\alpha$. Finally, 
		one chooses $\alpha_1<\alpha_*$ such that $|C\alpha|\le \delta \alpha^2$ for all $\alpha<\alpha_1$.
	\end{proof}

	\begin{corollary}\label{corol7}
		Let $\theta\in(0,\frac{\pi}{2})$ and $\delta>0$, then one can find  $L>0$, $\varepsilon>0$, $M>L\tan\theta$
		as well as $\alpha_1\in(-\infty,-1)$ and $\alpha_0\in(-1,0)$ such that
		\begin{align*}
			E(Q^{-1,1}_{\theta,L,\varepsilon,M}), E(\Tilde Q^{-1,1}_{\theta,L,\varepsilon,M})&\in \Big[ -\dfrac{1}{\sin^2\theta},-\dfrac{1}{\sin^2\theta}+\delta\Big],\\
			\dfrac{E(\Tilde Q^{\alpha,1}_{\theta,L,\varepsilon,M})}{\alpha^2}\ge \dfrac{E(Q^{\alpha,1}_{\theta,L,\varepsilon,M})}{\alpha^2}&\ge -(1+\delta) \text{ for all }\alpha\in(\alpha_0,0),\\
			\dfrac{E(\Tilde Q^{\alpha,1}_{\theta,L,\varepsilon,M})}{\alpha^2}\ge \dfrac{E(Q^{\alpha,1}_{\theta,L,\varepsilon,M})}{\alpha^2}&\ge -(1+\delta) \text{ for all }\alpha\in(-\infty,\alpha_1).
		\end{align*}
	\end{corollary}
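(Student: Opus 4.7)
The corollary is essentially a packaging of Lemmas~\ref{lem4}, \ref{lem5}, and \ref{lem6}; the three displayed bounds correspond, in order, to those three lemmas. The only real task is to choose the parameters $(\varepsilon, L, M, \alpha_0, \alpha_1)$ in a consistent order so that all three lemmas may be invoked simultaneously with the same geometric data.

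The plan is to fix parameters in the order suggested by the quantifier structure of the lemmas. First, for the given $\theta$ and $\delta$, I would apply Lemma~\ref{lem4} to obtain thresholds $\varepsilon_0, L_0 > 0$ such that the bound $E(Q^{-1,1}_{\theta,L,\varepsilon,M}), E(\Tilde Q^{-1,1}_{\theta,L,\varepsilon,M})\in [-1/\sin^2\theta, -1/\sin^2\theta+\delta]$ holds for \emph{every} admissible $M$ as soon as $\varepsilon\in(0,\varepsilon_0)$ and $L\in(L_0,\infty)$; I then fix any such $\varepsilon$ and $L$. Second, I would feed this $(\theta, L, \varepsilon)$ together with $\delta$ into Lemma~\ref{lem5}, which produces a particular $M\in(L\tan\theta,\infty)$ and some $\alpha_0^*<0$ controlling the regime $\alpha$ close to $0$. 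Crucially, because Lemma~\ref{lem4}'s conclusion is uniform in $M > L\tan\theta$, this specific $M$ still yields the first bound, so no conflict arises. Third, with $(\theta, L, \varepsilon, M)$ now entirely frozen, I would apply Lemma~\ref{lem6} with the same $\delta$ to obtain $\alpha_1^*<0$ governing the regime $\alpha\to-\infty$.

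To match the sign constraints $\alpha_0\in(-1,0)$ and $\alpha_1\in(-\infty,-1)$ stipulated in the corollary, I would simply replace $\alpha_0^*$ by $\alpha_0:=\max\{\alpha_0^*,-1/2\}$ and $\alpha_1^*$ by $\alpha_1:=\min\{\alpha_1^*,-2\}$; shrinking the intervals $(\alpha_0,0)$ and $(-\infty,\alpha_1)$ on which the lemmas' bounds are asserted only preserves those bounds. Division by $\alpha^2>0$ to pass from the form $E\ge -(1+\delta)\alpha^2$ of Lemmas~\ref{lem5}--\ref{lem6} to the form $E/\alpha^2\ge -(1+\delta)$ required here is harmless.

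There is no genuine obstacle: the three lemmas were evidently engineered so that their quantifier orderings chain together (Lemma~\ref{lem4} is uniform in $M$; Lemma~\ref{lem5} then selects $M$ given $L,\varepsilon$; Lemma~\ref{lem6} works for any admissible quadruple). The only point worth double-checking is that the $M$ produced by Lemma~\ref{lem5} is compatible with the requirement $M>L\tan\theta$ needed in Lemma~\ref{lem4}, which is built into the statement of Lemma~\ref{lem5} itself.
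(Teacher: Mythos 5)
Your proposal is correct and follows exactly the same chain of lemmas in the same order as the paper's own (very brief) proof: Lemma~\ref{lem4} fixes $\varepsilon$ and $L$ uniformly in $M$, then Lemma~\ref{lem5} picks $M$ and $\alpha_0$, then Lemma~\ref{lem6} picks $\alpha_1$. Your extra remark about adjusting $\alpha_0$ and $\alpha_1$ to fit the stated intervals $(-1,0)$ and $(-\infty,-1)$ is a small but welcome clarification that the paper leaves implicit.
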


	\begin{proof}
		Using Lemma \ref{lem4} we choose $\varepsilon$ and $L$ such that the first condition is fulfilled for all $M>L\tan\theta$. Then we choose a sufficiently large $M>0$ and a sufficiently small negative $\alpha_0$ to satisfy the second condition using Lemma~\ref{lem5}. By applying Lemma \ref{lem6} we find a large negative $\alpha_1$  that satisfies the third condition.
	\end{proof}
	
	\section{Proof of Theorem~\ref{thm1}}
	
	We will first construct an unbounded domain $U$ with Lipschitz boundary for which
	the limit $E(R^\alpha_U)/\alpha^2$ for $\alpha\to-\infty$ does not exist. Then we
	construct a required bounded domain $\Omega$ using suitable truncations.
	
	\subsection{Constructing an unbounded ``bad'' domain}\label{ssec31}
	Pick arbitrary
	\[
	0<\theta'<\theta''<\frac{\pi}{2}
	\]
	and denote
	\[
	\delta:=\dfrac{1}{4}\min\bigg\{\Big|\dfrac{1}{\sin^2\theta'}-\dfrac{1}{\sin^2\theta''}\Big|,
	\Big|\dfrac{1}{\sin^2\theta''}-1\Big|\bigg\},
	\]
	then, in particular,
	\[
	-\dfrac{1}{\sin^2\theta'}<-\dfrac{1}{\sin^2\theta'}+\delta<-\dfrac{1}{\sin^2\theta''}<-\dfrac{1}{\sin^2\theta''}+\delta<-(1+\delta).
	\]
	Choose parameters $(L',\varepsilon',M',\alpha'_0,\alpha'_1)$ for the angle $\theta'$, then parameters $(L'',\varepsilon',M'',\alpha''_0,\alpha''_1)$ for the angle $\theta''$, such that the respective estimates of Corollary~\ref{corol7} are satisfied with the above $\delta$, then denote
	\[
	\alpha_0:=\max\{\alpha'_0,\alpha''_0\}\in(-1,0),\quad \alpha_1:=\min\{\alpha'_1,\alpha''_1\}\in(-\infty,-1)
	\]
	and introduce the positive constants
	\[
	t:=\dfrac{\alpha_0}{\alpha_1}<|\alpha_0|<1,\qquad
	\gamma:=\dfrac{1}{t}\equiv \dfrac{\alpha_1}{\alpha_0}>|\alpha_1|>1.
	\]

	For any $m,n\in\NN$ one has:
	\[
	-\gamma^m t^n=-1 \text{ if $m=n$},
	\quad
	-\gamma^m t^n\notin[\alpha_1,\alpha_0] \text{ if $m\ne n$.}
	\]
	Using the inclusions $[\alpha'_1,\alpha'_0]\subset [\alpha_1,\alpha_0]$ and $[\alpha''_1,\alpha''_0]\subset [\alpha_1,\alpha_0]$ as well as the scaling relations \eqref{eq-scale},
	\[
	\dfrac{E(A^{-\gamma^m,t^n}_{\theta',L',\varepsilon',M'})}{(\gamma^m)^2}=
	\dfrac{E(A^{-\gamma^mt^n,1}_{\theta',L',\varepsilon',M'})}{(\gamma^m t^n)^2} \text{ etc.,}
	\]
	we obtain for any $m,n\in\NN$, due to Corollary \ref{corol7},
	\begin{align}
		\label{eq-incl1}	
		\dfrac{E(A^{-\gamma^m,t^n}_{\theta',L',\varepsilon',M'})}{(\gamma^m)^2},\dfrac{E(\Tilde A^{-\gamma^m,t^n}_{\theta',L',\varepsilon',M'})}{(\gamma^m)^2}
		&\in \begin{cases}
			\big[ -\frac{1}{\sin^2\theta'},-\frac{1}{\sin^2\theta'}+\delta\big], & m=n,\\[\medskipamount]
			\big[-(1+\delta),\infty\big), & m\ne n,
		\end{cases}\\
		\label{eq-incl2}	
		\dfrac{E(A^{-\gamma^m,t^n}_{\theta'',L'',\varepsilon'',M''})}{(\gamma^m)^2}, \dfrac{E(\Tilde A^{-\gamma^m,t^n}_{\theta'',L'',\varepsilon'',M''})}{(\gamma^m)^2}&
		\in \begin{cases}
			\big[ -\frac{1}{\sin^2\theta''},-\frac{1}{\sin^2\theta''}+\delta\big], & m=n,\\[\medskipamount]
			\big[-(1+\delta),\infty\big), & m\ne n.
		\end{cases}
	\end{align}		
	
	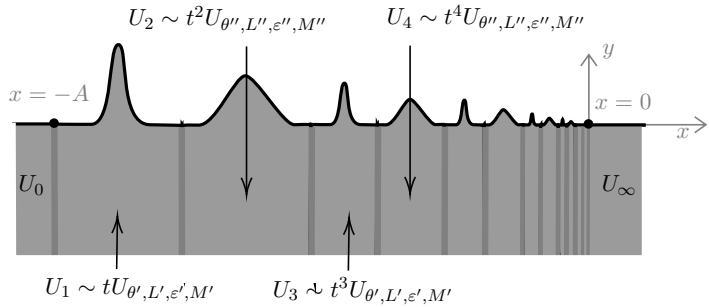
\begin{figure}[b]
		\centering

		\tikzset{every picture/.style={line width=0.75pt}} 
		
		\scalebox{0.8}{\begin{tikzpicture}[x=0.75pt,y=0.75pt,yscale=-1,xscale=1]
				
				\draw [color={rgb, 255:red, 128; green, 128; blue, 128 }  ,draw opacity=1 ] (50,80.75) -- (437,80.75)(364,36.75) -- (364,154) (430,75.75) -- (437,80.75) -- (430,85.75) (359,43.75) -- (364,36.75) -- (369,43.75)  ;
				\draw  [color={rgb, 255:red, 0; green, 0; blue, 0 }  ,draw opacity=1 ][fill={rgb, 255:red, 155; green, 155; blue, 155 }  ,fill opacity=1 ][line width=1.5]  (30.45,80.25) .. controls (32.9,80.5) and (39.25,80.5) .. (53.5,80.5) .. controls (67.75,80.5) and (64,30.4) .. (70.6,30.4) .. controls (77.2,30.4) and (70.2,80.45) .. (90.25,80.75) .. controls (110.3,81.05) and (110,81.25) .. (110.25,81) .. controls (110.5,80.75) and (112,182.25) .. (111.5,177) .. controls (111,171.75) and (30.2,172.75) .. (30.45,175.25) .. controls (30.7,177.75) and (28,80) .. (30.45,80.25) -- cycle ;
				\draw  [color={rgb, 255:red, 0; green, 0; blue, 0 }  ,draw opacity=1 ][fill={rgb, 255:red, 155; green, 155; blue, 155 }  ,fill opacity=1 ][line width=1.5]  (110.25,81) .. controls (110.55,81.5) and (109.75,80.75) .. (120.5,81) .. controls (131.25,81.25) and (143.15,50) .. (149.75,50) .. controls (156.35,50) and (173.25,81) .. (180.5,80.75) .. controls (187.75,80.5) and (190.25,80.5) .. (190.5,80.75) .. controls (190.75,81) and (193.05,183.25) .. (192.75,178) .. controls (192.45,172.75) and (112.55,178.75) .. (111.25,175.75) .. controls (109.95,172.75) and (109.95,80.5) .. (110.25,81) -- cycle ;
				\draw  [color={rgb, 255:red, 0; green, 0; blue, 0 }  ,draw opacity=1 ][fill={rgb, 255:red, 155; green, 155; blue, 155 }  ,fill opacity=1 ][line width=1.5]  (190.79,80.42) .. controls (190.94,80.68) and (195.21,80.68) .. (202.57,80.68) .. controls (209.94,80.68) and (208.13,54.65) .. (211.54,54.65) .. controls (214.95,54.65) and (211.34,80.52) .. (221.7,80.68) .. controls (232.07,80.83) and (231.91,80.94) .. (232.04,80.81) .. controls (232.17,80.68) and (231.5,163.62) .. (231.33,169.67) .. controls (231.16,175.71) and (192.61,172.2) .. (191,178.25) .. controls (189.39,184.3) and (190.63,80.16) .. (190.79,80.42) -- cycle ;
				\draw  [color={rgb, 255:red, 0; green, 0; blue, 0 }  ,draw opacity=1 ][fill={rgb, 255:red, 155; green, 155; blue, 155 }  ,fill opacity=1 ][line width=1.5]  (232.04,80.81) .. controls (232.2,81.07) and (231.78,80.68) .. (237.34,80.81) .. controls (242.9,80.94) and (249.05,64.78) .. (252.46,64.78) .. controls (255.87,64.78) and (264.61,80.81) .. (268.36,80.68) .. controls (272.11,80.55) and (273.4,80.55) .. (273.53,80.68) .. controls (273.66,80.81) and (274,153.33) .. (273.67,169.67) .. controls (273.33,186) and (235,169.33) .. (233,169) .. controls (231,168.67) and (231.89,80.55) .. (232.04,80.81) -- cycle ;
				\draw  [color={rgb, 255:red, 0; green, 0; blue, 0 }  ,draw opacity=1 ][fill={rgb, 255:red, 155; green, 155; blue, 155 }  ,fill opacity=1 ][line width=1.5]  (274.05,80.62) .. controls (274.14,80.77) and (276.67,80.77) .. (281.05,80.77) .. controls (285.42,80.77) and (284.35,65.32) .. (286.38,65.32) .. controls (288.4,65.32) and (286.25,80.68) .. (292.41,80.77) .. controls (298.56,80.87) and (298.47,80.93) .. (298.55,80.85) .. controls (298.63,80.77) and (298.67,149.33) .. (298.33,167) .. controls (298,184.67) and (273.33,173) .. (273.67,169.67) .. controls (274,166.33) and (273.96,80.47) .. (274.05,80.62) -- cycle ;
				\draw  [color={rgb, 255:red, 0; green, 0; blue, 0 }  ,draw opacity=1 ][fill={rgb, 255:red, 155; green, 155; blue, 155 }  ,fill opacity=1 ][line width=1.5]  (298.55,80.85) .. controls (298.64,81) and (298.39,80.77) .. (301.7,80.85) .. controls (305,80.93) and (308.65,71.33) .. (310.68,71.33) .. controls (312.7,71.33) and (317.89,80.85) .. (320.12,80.77) .. controls (322.34,80.7) and (323.11,80.7) .. (323.19,80.77) .. controls (323.26,80.85) and (322.79,161.96) .. (323,167) .. controls (323.21,172.04) and (297.67,167) .. (298.33,167) .. controls (299,167) and (298.46,80.7) .. (298.55,80.85) -- cycle ;
				\draw  [color={rgb, 255:red, 0; green, 0; blue, 0 }  ,draw opacity=1 ][fill={rgb, 255:red, 155; green, 155; blue, 155 }  ,fill opacity=1 ][line width=1.5]  (323.19,80.77) .. controls (323.23,80.84) and (324.34,80.84) .. (326.27,80.84) .. controls (328.2,80.84) and (327.72,74.03) .. (328.62,74.03) .. controls (329.51,74.03) and (328.56,80.8) .. (331.28,80.84) .. controls (333.99,80.88) and (333.95,80.91) .. (333.98,80.88) .. controls (334.01,80.84) and (334.01,153.65) .. (333.53,162.05) .. controls (333.04,170.44) and (324.86,166.71) .. (323.53,168.38) .. controls (322.19,170.05) and (323.15,80.71) .. (323.19,80.77) -- cycle ;
				\draw  [color={rgb, 255:red, 0; green, 0; blue, 0 }  ,draw opacity=1 ][fill={rgb, 255:red, 155; green, 155; blue, 155 }  ,fill opacity=1 ][line width=1.5]  (333.98,80.88) .. controls (334.02,80.94) and (333.91,80.84) .. (335.37,80.88) .. controls (336.82,80.91) and (338.43,76.68) .. (339.32,76.68) .. controls (340.22,76.68) and (342.5,80.88) .. (343.48,80.84) .. controls (344.46,80.81) and (344.8,80.81) .. (344.84,80.84) .. controls (344.87,80.88) and (344.61,155.38) .. (345,165) .. controls (345.39,174.62) and (333.91,163.63) .. (333.75,168.25) .. controls (333.59,172.87) and (333.94,80.81) .. (333.98,80.88) -- cycle ;
				\draw  [color={rgb, 255:red, 0; green, 0; blue, 0 }  ,draw opacity=1 ][fill={rgb, 255:red, 155; green, 155; blue, 155 }  ,fill opacity=1 ][line width=1.5]  (345,80.8) .. controls (345.02,80.83) and (345.56,80.83) .. (346.49,80.83) .. controls (347.43,80.83) and (347.2,77.53) .. (347.63,77.53) .. controls (348.06,77.53) and (347.6,80.81) .. (348.92,80.83) .. controls (350.23,80.85) and (350.21,80.86) .. (350.23,80.85) .. controls (350.24,80.83) and (350.24,160.93) .. (350,165) .. controls (349.76,169.07) and (344.59,169.26) .. (344.75,154.25) .. controls (344.91,139.24) and (344.98,80.77) .. (345,80.8) -- cycle ;
				\draw  [color={rgb, 255:red, 0; green, 0; blue, 0 }  ,draw opacity=1 ][fill={rgb, 255:red, 155; green, 155; blue, 155 }  ,fill opacity=1 ][line width=1.5]  (350.23,80.85) .. controls (350.25,80.88) and (350.19,80.83) .. (350.9,80.85) .. controls (351.6,80.86) and (352.38,78.82) .. (352.82,78.82) .. controls (353.25,78.82) and (354.36,80.85) .. (354.83,80.83) .. controls (355.31,80.81) and (355.47,80.81) .. (355.49,80.83) .. controls (355.5,80.85) and (355.81,162.09) .. (356,166.75) .. controls (356.19,171.41) and (350.08,164.76) .. (350,167) .. controls (349.92,169.24) and (350.21,80.81) .. (350.23,80.85) -- cycle ;
				\draw  [fill={rgb, 255:red, 155; green, 155; blue, 155 }  ,fill opacity=1 ][line width=1.5]  (5.25,80.25) -- (30.45,80.25) -- (30.45,175.25) -- (5.25,175.25) -- cycle ;
				\draw  [fill={rgb, 255:red, 155; green, 155; blue, 155 }  ,fill opacity=1 ][line width=1.5]  (355.49,80.83) -- (398.49,80.83) -- (398.49,168.25) -- (355.49,168.25) -- cycle ;
				\draw [color={rgb, 255:red, 128; green, 128; blue, 128 }  ,draw opacity=1 ][line width=3]    (30.6,81.75) -- (31,179.25) ;
				\draw [color={rgb, 255:red, 128; green, 128; blue, 128 }  ,draw opacity=1 ][line width=3]    (110.1,82.25) -- (110.25,177.75) ;
				\draw [color={rgb, 255:red, 128; green, 128; blue, 128 }  ,draw opacity=1 ][line width=3]    (191.25,82.25) -- (191,178.25) ;
				\draw [color={rgb, 255:red, 128; green, 128; blue, 128 }  ,draw opacity=1 ][line width=3]    (232.5,82) -- (232.25,172) ;
				\draw [color={rgb, 255:red, 128; green, 128; blue, 128 }  ,draw opacity=1 ][line width=3]    (274.25,82) -- (274,172) ;
				\draw [color={rgb, 255:red, 128; green, 128; blue, 128 }  ,draw opacity=1 ][line width=3]    (299.5,82.25) -- (299.25,172.25) ;
				\draw [color={rgb, 255:red, 128; green, 128; blue, 128 }  ,draw opacity=1 ][line width=2.25]    (322.75,82) -- (322.5,172) ;
				\draw [color={rgb, 255:red, 128; green, 128; blue, 128 }  ,draw opacity=1 ][line width=2.25]    (334,82.75) -- (333.75,172.75) ;
				\draw [color={rgb, 255:red, 128; green, 128; blue, 128 }  ,draw opacity=1 ][line width=2.25]    (345,82.75) -- (344.75,172.75) ;
				\draw [color={rgb, 255:red, 128; green, 128; blue, 128 }  ,draw opacity=1 ][line width=2.25]    (355.75,82.25) -- (355.5,172.25) ;
				\draw [color={rgb, 255:red, 128; green, 128; blue, 128 }  ,draw opacity=1 ][line width=2.25]    (350.25,82.5) -- (350,172.5) ;
				\draw [color={rgb, 255:red, 128; green, 128; blue, 128 }  ,draw opacity=1 ][line width=1.5]    (360.25,82) -- (360,172) ;
				\draw [color={rgb, 255:red, 128; green, 128; blue, 128 }  ,draw opacity=1 ][line width=1.5]    (363.75,81.75) -- (363.5,171.75) ;
				\draw [color={rgb, 255:red, 255; green, 255; blue, 255 }  ,draw opacity=1 ][line width=3]    (4.6,79.5) -- (4.9,189.5) ;
				\draw [color={rgb, 255:red, 255; green, 255; blue, 255 }  ,draw opacity=1 ][line width=3]    (398.99,82.08) -- (399,171.75) ;
				\draw  [color={rgb, 255:red, 255; green, 255; blue, 255 }  ,draw opacity=1 ][fill={rgb, 255:red, 255; green, 255; blue, 255 }  ,fill opacity=1 ] (6.75,163.25) -- (400.75,163.25) -- (400.75,179.75) -- (6.75,179.75) -- cycle ;
				\draw    (69.5,171.5) -- (69.97,140) ;
				\draw [shift={(70,138)}, rotate = 90.86] [color={rgb, 255:red, 0; green, 0; blue, 0 }  ][line width=0.75]    (10.93,-3.29) .. controls (6.95,-1.4) and (3.31,-0.3) .. (0,0) .. controls (3.31,0.3) and (6.95,1.4) .. (10.93,3.29)   ;
				\draw    (150.5,31.5) -- (150.5,123) ;
				\draw [shift={(150.5,125)}, rotate = 270] [color={rgb, 255:red, 0; green, 0; blue, 0 }  ][line width=0.75]    (10.93,-3.29) .. controls (6.95,-1.4) and (3.31,-0.3) .. (0,0) .. controls (3.31,0.3) and (6.95,1.4) .. (10.93,3.29)   ;
				\draw    (214,172.5) -- (214.47,141) ;
				\draw [shift={(214.5,139)}, rotate = 90.86] [color={rgb, 255:red, 0; green, 0; blue, 0 }  ][line width=0.75]    (10.93,-3.29) .. controls (6.95,-1.4) and (3.31,-0.3) .. (0,0) .. controls (3.31,0.3) and (6.95,1.4) .. (10.93,3.29)   ;
				\draw    (252.5,31) -- (252.5,122.5) ;
				\draw [shift={(252.5,124.5)}, rotate = 270] [color={rgb, 255:red, 0; green, 0; blue, 0 }  ][line width=0.75]    (10.93,-3.29) .. controls (6.95,-1.4) and (3.31,-0.3) .. (0,0) .. controls (3.31,0.3) and (6.95,1.4) .. (10.93,3.29)   ;
				\draw  [fill={rgb, 255:red, 0; green, 0; blue, 0 }  ,fill opacity=1 ] (361.25,80.25) .. controls (361.25,78.87) and (362.37,77.75) .. (363.75,77.75) .. controls (365.13,77.75) and (366.25,78.87) .. (366.25,80.25) .. controls (366.25,81.63) and (365.13,82.75) .. (363.75,82.75) .. controls (362.37,82.75) and (361.25,81.63) .. (361.25,80.25) -- cycle ;
				\draw  [fill={rgb, 255:red, 0; green, 0; blue, 0 }  ,fill opacity=1 ] (27.95,79.75) .. controls (27.95,78.37) and (29.07,77.25) .. (30.45,77.25) .. controls (31.83,77.25) and (32.95,78.37) .. (32.95,79.75) .. controls (32.95,81.13) and (31.83,82.25) .. (30.45,82.25) .. controls (29.07,82.25) and (27.95,81.13) .. (27.95,79.75) -- cycle ;
				
				\draw (23.45,175.65) node [anchor=north west][inner sep=0.75pt]    {$U_{1} \sim tU_{\theta ',L',\varepsilon ',M'}$};
				\draw (75.45,4.65) node [anchor=north west][inner sep=0.75pt]    {$U_{2} \sim t^{2} U_{\theta '',L'',\varepsilon '',M''}$};
				\draw (165.45,175.15) node [anchor=north west][inner sep=0.75pt]    {$U_{3} \sim t^{3} U_{\theta ',L',\varepsilon ',M'}$};
				\draw (236.45,3.9) node [anchor=north west][inner sep=0.75pt]    {$U_{4} \sim t^{4} U_{\theta '',L'',\varepsilon '',M''}$};
				\draw (417.25,81.4) node [anchor=north west][inner sep=0.75pt]  [color={rgb, 255:red, 128; green, 128; blue, 128 }  ,opacity=1 ]  {$x$};
				\draw (371,28.4) node [anchor=north west][inner sep=0.75pt]  [color={rgb, 255:red, 128; green, 128; blue, 128 }  ,opacity=1 ]  {$y$};
				\draw (366.25,60.15) node [anchor=north west][inner sep=0.75pt]  [color={rgb, 255:red, 128; green, 128; blue, 128 }  ,opacity=1 ]  {$x=0$};
				\draw (0.75,54.65) node [anchor=north west][inner sep=0.75pt]  [color={rgb, 255:red, 128; green, 128; blue, 128 }  ,opacity=1 ]  {$x=-A$};
				\draw (6.75,110.9) node [anchor=north west][inner sep=0.75pt]    {$U_{0}$};
				\draw (370.5,110.9) node [anchor=north west][inner sep=0.75pt]    {$U_{\infty }$};

		\end{tikzpicture}}

		\caption{The structure of the domain $U$. The subdomains $U_n$ are glued along the side boundaries shown as bold dark gray lines.}\label{fig:uu}
	\end{figure}
	
	For $n\in\NN$ consider the domains
	\[
	U_n:=\begin{cases}
		t^n U_{\theta',L',\varepsilon',M'}, & \text{$n$ is odd,}\\
		t^n U_{\theta'',L'',\varepsilon'',M''}, & \text{$n$ is even,}
	\end{cases}
	\]
	then put them next to each other and glue along side boundaries such that the left boundary corresponds to $x=-A$ for some $A>0$ and the right boundary corresponds to $x=0$.
	The domain is then completed by gluing it with $U_0:=(-\infty,-A)\times (-\infty,0)$ on the left  and with $U_\infty:=(0,\infty)\times(-\infty,0)$ on the right. The constructions are illustrated in Fig.~\ref{fig:uu}. The resulting domain $U$
	has the form
	\[
	U:=\big\{(x,y)\in\RR^2:\, y<h(x)\big\}
	\]
	for a continuous function $h:\RR\to\RR$. If the transition from $U_{n-1}$ to $U_n$ happens along the line $x=a_n$, then on the interval $(a_{n},a_{n+1})$ the function $h$ coincides either with $t^nh_{\theta',L',\varepsilon',M'}(t^{-n}\cdot-b_n)$ or with $t^n h_{\theta'',L'',\varepsilon'',M''}(t^{-n}\cdot-b_n)$
	with some $b_n\in\RR$, and $h$ is identically zero outside $(-A,0)$. It follows that
	\[
	\|h'\|_\infty\le \max\big\{\|h'_{\theta',L',\varepsilon',M'}\|_\infty,\|h'_{\theta'',L'',\varepsilon'',M''}\|_\infty\big\}<\infty,
	\]
	in particular, $h$  is Lipschitz. In fact, the above discussion
	even shows that $h$ is $C^\infty$ at all points except at $0$.
	
	We are interested in the associated Robin Laplacian $R^\alpha_U$.
	By using the standard Dirichlet-Neumann bracketing along the gluing lines one shows 
	\[
	E\Big( A^\alpha_0 \oplus \bigoplus_{n=1}^\infty A^\alpha_n \oplus A^\alpha_\infty\Big)
	\le
	E(R^\alpha_U)\le 
	E\Big( \Tilde A^\alpha_0 \oplus \bigoplus_{n=1}^\infty \Tilde A^\alpha_n \oplus \Tilde A^\alpha_\infty\Big)
	\]
	where $A^\alpha_n$ (respectively $\Tilde A^\alpha_n$) are the Laplacians in $U_n$ with the Robin boundary condition $\partial_\nu u+\alpha u=0$ along the curve $y=h(x)$ (the top boundary),
	and Neumann (respectively Dirichlet) boundary conditions on the side boundaries.
	
	The operators $A^\alpha_0,\Tilde A^\alpha_0, A^\alpha_\infty,\Tilde A^\alpha_\infty$ admit a separation of variables, and for any $\alpha<0$ one has $E(A^\alpha_0)=E(\Tilde A^\alpha_0)=E(A^\alpha_\infty)=E(\Tilde A^\alpha_\infty)=-\alpha^2$.	On top of that, for any $n\in\NN$ the following holds:
	\begin{itemize}
		\item[(i)] $A_n^\alpha$ (respectively $\Tilde A_n^\alpha$) is unitarily equivalent to $A^{\alpha,t^n}_{\theta',L',\varepsilon',M'}$ (respectively $\Tilde A^{\alpha,t^n}_{\theta',L',\varepsilon',M'}$), if $n$ is odd,
		\item[(ii)] $A_n^\alpha$ (respectively $\Tilde A_n^\alpha$) is unitarily equivalent to $A^{\alpha,t^n}_{\theta'',L'',\varepsilon',M''}$ (respectively $\Tilde A^{\alpha,t^n}_{\theta'',L'',\varepsilon'',M''}$), if $n$ is even.
	\end{itemize}
	Therefore, for any $\alpha<0$ we have
	\begin{align*}
		E\Big( A^\alpha_0 \oplus &\bigoplus_{n=1}^\infty A^\alpha_n \oplus A^\alpha_\infty\Big)
		=\min\Big\{E(A^\alpha_0), E(A^\alpha_\infty), \inf_{n\in\NN} E(A_n^\alpha)\Big\},\\
		&=\min\Big\{-\alpha^2,\inf_{p\in\NN} E(A^{\alpha,t^{2p-1}}_{\theta',L',\varepsilon',M'}),
		\inf_{p\in\NN} E(A^{\alpha,t^{2p}}_{\theta'',L'',\varepsilon'',M''})\Big\},\\
		\intertext{and analogously}
		E\Big( \Tilde A^\alpha_0 \oplus &\bigoplus_{n=1}^\infty \Tilde A^\alpha_n \oplus \Tilde A^\alpha_\infty\Big)
		=\min\Big\{E(\Tilde A^\alpha_0), E(\Tilde A^\alpha_\infty), \inf_{n\in\NN} E(\Tilde A_n^\alpha)\Big\},\\
		&=\min\Big\{-\alpha^2,\inf_{p\in\NN} E(\Tilde A^{\alpha,t^{2p-1}}_{\theta',L',\varepsilon',M'}),
		\inf_{p\in\NN} E(\Tilde A^{\alpha,t^{2p}}_{\theta'',L'',\varepsilon'',M''})\Big\}.
	\end{align*}	
	To summarize, for any $\alpha<0$, we have
	\begin{equation}
		\label{eq-twoside1}
		\begin{aligned}
			\min\Big\{-1,&\inf_{p\in\NN} \dfrac{E(A^{\alpha,t^{2p-1}}_{\theta',L',\varepsilon',M'})}{\alpha^2},
			\inf_{p\in\NN} \dfrac{E(A^{\alpha,t^{2p}}_{\theta'',L'',\varepsilon'',M''})}{\alpha^2}\Big\}	\le \dfrac{E(R_U^\alpha)}{\alpha^2}\\
			&\le \min\Big\{-1,\inf_{p\in\NN} \dfrac{E(\Tilde A^{\alpha,t^{2p-1}}_{\theta',L',\varepsilon',M'})}{\alpha^2},
			\inf_{p\in\NN} \dfrac{E(\Tilde A^{\alpha,t^{2p}}_{\theta'',L'',\varepsilon'',M''})}{\alpha^2}\Big\}.
		\end{aligned}
	\end{equation}
	
	Let us substitute $\alpha:=-\gamma^{2q-1}$ with $q\in\NN$. By \eqref{eq-incl1} and \eqref{eq-incl2} we obtain
	\begin{align*}
		\dfrac{E(A^{-\gamma^{2q-1},t^{2q-1}}_{\theta',L',\varepsilon',M'})}{(\gamma^{2q-1})^2},
		\dfrac{E(\Tilde A^{-\gamma^{2q-1},t^{2q-1}}_{\theta',L',\varepsilon',M'})}{(\gamma^{2q-1})^2}&\in \Big[-\dfrac{1}{\sin^2\theta'},-\dfrac{1}{\sin^2\theta'}+\delta\Big],\\
		\dfrac{E(A^{-\gamma^{2q-1},t^{2p-1}}_{\theta',L',\varepsilon',M'})}{(\gamma^{2q-1})^2},
		\dfrac{E(\Tilde A^{-\gamma^{2q-1},t^{2p-1}}_{\theta',L',\varepsilon',M'})}{(\gamma^{2q-1})^2}&\ge -(1+\delta) \text{ for all }p\ne q,\\
		\dfrac{E(A^{-\gamma^{2q-1},t^{2p}}_{\theta'',L'',\varepsilon'',M''})}{(\gamma^{2q-1})^2},
		\dfrac{E(\Tilde A^{-\gamma^{2q-1},t^{2p}}_{\theta'',L'',\varepsilon'',M''})}{(\gamma^{2q-1})^2}&\ge -(1+\delta) \text{ for all }p,
	\end{align*}
	hence, the both sides of \eqref{eq-twoside1} are in $\big[-\frac{1}{\sin^2\theta'},-\frac{1}{\sin^2\theta'}+\delta\big]$, which
	implies
	\begin{equation*}
		\dfrac{E(R_U^{-\gamma^{2q-1}})}{(\gamma^{2q-1})^2}\in \Big[-\dfrac{1}{\sin^2\theta'},-\dfrac{1}{\sin^2\theta'}+\delta\Big]
		\text{ for all }q\in\NN.
	\end{equation*}
	Simlarly, by considering $\alpha:=-\gamma^{2q}$ with $q\in\NN$ and using again \eqref{eq-incl1} and \eqref{eq-incl2} we show
	\begin{align*}
		\dfrac{E(A^{-\gamma^{2q},t^{2p-1}}_{\theta',L',\varepsilon',M'})}{(\gamma^{2q})^2},
		\dfrac{E(\Tilde A^{-\gamma^{2q},t^{2p-1}}_{\theta',L',\varepsilon',M'})}{(\gamma^{2q})^2}&\ge -(1+\delta) \text{ for all }p,\\
		\dfrac{E(A^{-\gamma^{2q},t^{2q}}_{\theta'',L'',\varepsilon'',M''})}{(\gamma^{2q})^2},
		\dfrac{E(\Tilde A^{-\gamma^{2q},t^{2q}}_{\theta'',L'',\varepsilon'',M''})}{(\gamma^{2q})^2}&\in \Big[-\dfrac{1}{\sin^2\theta''},-\dfrac{1}{\sin^2\theta''}+\delta\Big],\\
		\dfrac{E(A^{-\gamma^{2q},t^{2p}}_{\theta'',L'',\varepsilon'',M''})}{(\gamma^{2q})^2},
		\dfrac{E(\Tilde A^{-\gamma^{2q},t^{2p}}_{\theta'',L'',\varepsilon'',M''})}{(\gamma^{2q})^2}
		&\ge -(1+\delta) \text{ for all }p\ne q,
	\end{align*}
	which yields
	\begin{equation*}
		\dfrac{E(R_U^{-\gamma^{2q}})}{(\gamma^{2q})^2}\in \Big[-\dfrac{1}{\sin^2\theta''},-\dfrac{1}{\sin^2\theta''}+\delta\Big]
		\text{ for all }q\in\NN.
	\end{equation*}
	
	Therefore, for the sequences $(\gamma'_k)_{k\in\NN}$ and $(\gamma''_k)_{k\in\NN}$,
	\begin{equation}
		\label{eq-beta}
		\gamma'_k:=-\gamma^{2k-1},\quad \gamma''_k:=-\gamma^{2k},
	\end{equation}
	we have $\gamma'_k\xrightarrow{k\to\infty}-\infty$, $\gamma''_k\xrightarrow{k\to\infty}-\infty$, and for all $k\in\NN$, 
	\begin{equation}
		\label{eq-uu1}
		\begin{aligned}
			\dfrac{E(R_U^{\gamma'_k})}{(\gamma'_k)^2}&\in \Big[-\dfrac{1}{\sin^2\theta'},-\dfrac{1}{\sin^2\theta'}+\delta\Big],\\
			\quad \dfrac{E(R_U^{\gamma''_k})}{(\gamma''_k)^2}&\in \Big[-\dfrac{1}{\sin^2\theta''},-\dfrac{1}{\sin^2\theta''}+\delta\Big].
		\end{aligned}
	\end{equation}
	By construction, the intervals on the right-hand sides of \eqref{eq-uu1} are disjoint, so we are quite close to satisfying the requirements of Theorem~\ref{thm1}, but the domain $U$ is unbounded.
	
	\subsection{Constructing a bounded ``bad'' domain}\label{ssec32}
	By construction, the unbounded domain $U$ coincides with the half-plane
	$\Tilde U:=\RR\times(-\infty,0)$
	outside a compact set, so we can choose an open ball $B$ with $U\setminus \overline{B}=\Tilde U\setminus \overline{B}$. Then we pick another larger open ball $B'$ with $\overline{B}\subset B'$, see Fig.~\ref{fig:trunc}.

	\begin{figure}[b]
	\centering

	\scalebox{0.5}{\begin{tikzpicture}[x=0.75pt,y=0.75pt,yscale=-1,xscale=1]
		
		\draw  [color={rgb, 255:red, 155; green, 155; blue, 155 }  ,draw opacity=1 ][fill={rgb, 255:red, 155; green, 155; blue, 155 }  ,fill opacity=1 ][line width=0.75]  (21.09,137.35) .. controls (22.69,136.55) and (61.78,138.01) .. (77.83,137.68) .. controls (93.89,137.35) and (89.09,118.55) .. (93.2,118.6) .. controls (97.32,118.65) and (94.29,138.55) .. (108.38,137.97) .. controls (122.47,137.39) and (114.29,126.55) .. (123.5,126.1) .. controls (132.72,125.65) and (131.73,138.14) .. (139.21,137.75) .. controls (146.69,137.35) and (143.56,130.28) .. (147.89,130.15) .. controls (152.22,130.01) and (149.78,137.24) .. (157.03,137.89) .. controls (164.29,138.55) and (159.34,131.95) .. (162.82,131.76) .. controls (166.3,131.57) and (162.72,137.99) .. (168.91,137.84) .. controls (175.09,137.7) and (173.84,133.95) .. (176.09,133.7) .. controls (178.34,133.45) and (177.97,137.93) .. (181.67,137.91) .. controls (185.36,137.9) and (181.89,133.97) .. (185.1,134.27) .. controls (188.32,134.57) and (184.97,137.88) .. (188.72,137.88) .. controls (192.47,137.88) and (307.83,137.93) .. (307.84,137.95) .. controls (307.84,137.96) and (299,226) .. (302.5,243) .. controls (306,260) and (23,252) .. (22,251) .. controls (21,250) and (21.81,230.03) .. (22,195.5) .. controls (22.19,160.97) and (19.49,138.15) .. (21.09,137.35) -- cycle ;
		\draw  [draw opacity=0][fill={rgb, 255:red, 155; green, 155; blue, 155 }  ,fill opacity=1 ] (201.99,141.4) .. controls (202,141.68) and (202,141.96) .. (202,142.25) .. controls (202,175.53) and (172,202.5) .. (135,202.5) .. controls (98,202.5) and (68,175.53) .. (68,142.25) -- (135,142.25) -- cycle ;
		\draw  [color={rgb, 255:red, 128; green, 128; blue, 128 }  ,draw opacity=1 ][line width=2.25]  (44.5,132) .. controls (44.5,81.47) and (85.47,40.5) .. (136,40.5) .. controls (186.53,40.5) and (227.5,81.47) .. (227.5,132) .. controls (227.5,182.53) and (186.53,223.5) .. (136,223.5) .. controls (85.47,223.5) and (44.5,182.53) .. (44.5,132) -- cycle ;
		\draw  [dash pattern={on 2.53pt off 3.02pt}][line width=2.25]  (72.75,132) .. controls (72.75,97.07) and (101.07,68.75) .. (136,68.75) .. controls (170.93,68.75) and (199.25,97.07) .. (199.25,132) .. controls (199.25,166.93) and (170.93,195.25) .. (136,195.25) .. controls (101.07,195.25) and (72.75,166.93) .. (72.75,132) -- cycle ;
		\draw [color={rgb, 255:red, 0; green, 0; blue, 0 }  ,draw opacity=1 ][line width=2.25]    (21.09,137.35) .. controls (22.69,136.55) and (61.78,138.01) .. (77.83,137.68) .. controls (93.89,137.35) and (89.09,118.55) .. (93.2,118.6) .. controls (97.32,118.65) and (94.29,138.55) .. (108.38,137.97) .. controls (122.47,137.39) and (114.29,126.55) .. (123.5,126.1) .. controls (132.72,125.65) and (131.73,138.14) .. (139.21,137.75) .. controls (146.69,137.35) and (143.56,130.28) .. (147.89,130.15) .. controls (152.22,130.01) and (149.78,137.24) .. (157.03,137.89) .. controls (164.29,138.55) and (159.34,131.95) .. (162.82,131.76) .. controls (166.3,131.57) and (162.72,137.99) .. (168.91,137.84) .. controls (175.09,137.7) and (173.84,133.95) .. (176.09,133.7) .. controls (178.34,133.45) and (177.97,137.93) .. (181.67,137.91) .. controls (185.36,137.9) and (181.89,133.97) .. (185.1,134.27) .. controls (188.32,134.57) and (184.97,137.88) .. (188.72,137.88) .. controls (192.47,137.88) and (307.83,137.93) .. (307.84,137.95) ;
		\draw  [draw opacity=0][fill={rgb, 255:red, 155; green, 155; blue, 155 }  ,fill opacity=1 ] (178,218.5) -- (251,218.5) -- (251,244.5) -- (178,244.5) -- cycle ;
		
		\draw (142.5,78.9) node [anchor=north west][inner sep=0.75pt]  [font=\LARGE]  {$B$};
		\draw (258,172.9) node [anchor=north west][inner sep=0.75pt]  [font=\LARGE]  {$U$};
		\draw (226,64.9) node [anchor=north west][inner sep=0.75pt]  [font=\LARGE,color={rgb, 255:red, 128; green, 128; blue, 128 }  ,opacity=1 ]  {$B'$};

	\end{tikzpicture}}

		\caption{Truncations of $U$.}\label{fig:trunc}
	\end{figure}
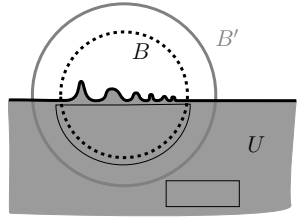
	
	Denote by $T^\alpha$ the self-adjoint operator in $L^2(B'\cap U)$ given by the hermitian sesquilinear form $t^\alpha$ defined by
	\begin{align*}
		t^\alpha(u,u)&:=\iint_{B'\cap U} |\nabla u|^2\dd x\dd y+\alpha\dint_{B'\cap \partial U}|u|^2\dd\sigma,\\
		\quad\cD(t^\alpha)&:=\{u\in H^1(B'\cap U):\, u=0 \text{ on } \partial B'\}.
	\end{align*}
	Any function in $\cD(t^\alpha)$ can be extended by zero to a function in $\cD(r^\alpha_U)$,
	then the min-max-principle implies
	\begin{equation}
		\label{eq-lup}
		E(R_U^\alpha)\le E(T^\alpha) \text{ for any }\alpha\in\RR.
	\end{equation}
	Now let us pick two $C^\infty$ functions $\chi,\Tilde\chi:\RR^2\to\RR$ such that
	\[
	\chi^2+\Tilde\chi^2=1,\quad \supp \chi\subset B',\quad \supp \Tilde\chi\subset \RR^2\setminus \overline{B}. 
	\]
	For any $u\in \cD(r^\alpha_U)$ one has the inclusions
	\[
	\supp \chi u\subset B'\cap U, \quad \supp\Tilde\chi u\subset U\setminus \overline{B}\equiv \Tilde U\setminus \overline{B}\subset \Tilde U,
	\]
	therefore,
	\begin{align*}
		r^\alpha_U(u,u)&
		=\iint_{U} |\nabla (\chi u)|^2\dd x\dd y+\alpha \int_{\partial U}|\chi u|^2\dd\sigma\\
		&\quad +\iint_{U} |\nabla (\Tilde \chi u)|^2\dd x\dd y+\alpha \int_{\partial U}|\Tilde\chi u|^2\dd\sigma\\
		&\quad - \iint_{U} \big(|\nabla \chi|^2 +|\nabla \Tilde \chi|^2\big)|u|^2\dd x\dd y\\
		&=t^\alpha(\chi u,\chi u)+r^\alpha_{\Tilde U}(\Tilde\chi u,\Tilde \chi u)- \iint_{U} \big(|\nabla \chi|^2 +|\nabla \Tilde \chi|^2\big)|u|^2\dd x\dd y\\
		&\ge t^\alpha(\chi u,\chi u) +r^\alpha_{\Tilde U}(\Tilde\chi u,\Tilde \chi u)-C\|u\|^2_{L^2(U)}
	\end{align*}
	with $C:=\big\||\nabla \chi|^2 +|\nabla \Tilde \chi|^2\big\|_\infty$.
	The min-max principle implies
\[
E(R_U^\alpha)\ge E(T^\alpha\oplus R^\alpha_{\Tilde U})-C=\min\big\{E(T^\alpha),E(R^\alpha_{\Tilde U})\big\}-C.
\]
The operator $R^\alpha_{\Tilde U}$ admits a separation of variables, and with the help of Lemma \ref{lem-halfline} for any $\alpha<0$, one obtains $E(R^\alpha_{\Tilde U})=-\alpha^2$, which yields
\begin{equation}
\label{eq-llow1}
	E(R_U^\alpha)\ge \min\big\{E(T^\alpha),-\alpha^2\big\}-C.
\end{equation}
For $\gamma_k\in\{\gamma'_k,\gamma''_k\}$ due to \eqref{eq-beta} and \eqref{eq-uu1} one can find a large $N\in\NN$ such that
	\begin{equation}
		\label{eq-lll}
		E(R_U^{\gamma_k})\le \Big(-\dfrac{1}{\sin^2\theta''}+\delta\Big)\gamma_k^2\le -\gamma_k^2-C \text{ for all } k>N,
	\end{equation}
	then \eqref{eq-llow1} implies $E(R_U^{\gamma_k})\ge E(T^{\gamma_k})-C$, and by combining with \eqref{eq-lup} we arrive at
	\begin{equation}
		\label{eq-twoside2}
		E(R_U^{\gamma_k})\le E(T^{\gamma_k})\le E(R_U^{\gamma_k})+C \text{ for all $k>N$ and $\gamma_k\in\{\gamma'_k,\gamma''_k\}$.}
	\end{equation}
	
	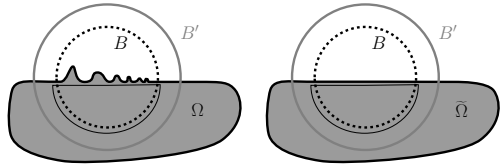
\begin{figure}[b]
		\centering
		
		\scalebox{0.4}{\begin{tikzpicture}[x=0.75pt,y=0.75pt,yscale=-1,xscale=1]
				
				\draw  [color={rgb, 255:red, 0; green, 0; blue, 0 }  ,draw opacity=1 ][fill={rgb, 255:red, 155; green, 155; blue, 155 }  ,fill opacity=1 ][line width=2.25]  (37.5,167.75) .. controls (40.75,157.5) and (52.47,156.97) .. (64.34,157.23) .. controls (76.22,157.49) and (89.8,157.85) .. (97.83,157.68) .. controls (105.86,157.51) and (109.09,138.55) .. (113.2,138.6) .. controls (117.32,138.65) and (114.29,158.55) .. (128.38,157.97) .. controls (142.47,157.39) and (134.29,146.55) .. (143.5,146.1) .. controls (152.72,145.65) and (151.73,158.14) .. (159.21,157.75) .. controls (166.69,157.35) and (163.56,150.28) .. (167.89,150.15) .. controls (172.22,150.01) and (169.78,157.24) .. (177.03,157.89) .. controls (184.29,158.55) and (179.34,151.95) .. (182.82,151.76) .. controls (186.3,151.57) and (182.72,157.99) .. (188.91,157.84) .. controls (195.09,157.7) and (193.84,153.95) .. (196.09,153.7) .. controls (198.34,153.45) and (197.97,157.93) .. (201.67,157.91) .. controls (205.36,157.9) and (201.89,153.97) .. (205.1,154.27) .. controls (208.32,154.57) and (204.97,157.88) .. (208.72,157.88) .. controls (212.47,157.88) and (240.37,157.89) .. (269.68,157.91) .. controls (298.99,157.92) and (307.75,157.75) .. (316,168.75) .. controls (324.25,179.75) and (328.5,208.25) .. (299.75,230.5) .. controls (271,252.75) and (134,262.5) .. (100,257.25) .. controls (66,252) and (39.5,241.5) .. (35.25,223.25) .. controls (31,205) and (34.25,178) .. (37.5,167.75) -- cycle ;
				\draw  [draw opacity=0][fill={rgb, 255:red, 155; green, 155; blue, 155 }  ,fill opacity=1 ] (221.99,161.4) .. controls (222,161.68) and (222,161.96) .. (222,162.25) .. controls (222,195.53) and (192,222.5) .. (155,222.5) .. controls (118,222.5) and (88,195.53) .. (88,162.25) -- (155,162.25) -- cycle ;
				\draw  [color={rgb, 255:red, 128; green, 128; blue, 128 }  ,draw opacity=1 ][line width=2.25]  (64.5,152) .. controls (64.5,101.47) and (105.47,60.5) .. (156,60.5) .. controls (206.53,60.5) and (247.5,101.47) .. (247.5,152) .. controls (247.5,202.53) and (206.53,243.5) .. (156,243.5) .. controls (105.47,243.5) and (64.5,202.53) .. (64.5,152) -- cycle ;
				\draw  [dash pattern={on 2.53pt off 3.02pt}][line width=2.25]  (92.75,152) .. controls (92.75,117.07) and (121.07,88.75) .. (156,88.75) .. controls (190.93,88.75) and (219.25,117.07) .. (219.25,152) .. controls (219.25,186.93) and (190.93,215.25) .. (156,215.25) .. controls (121.07,215.25) and (92.75,186.93) .. (92.75,152) -- cycle ;
				\draw  [color={rgb, 255:red, 0; green, 0; blue, 0 }  ,draw opacity=1 ][fill={rgb, 255:red, 155; green, 155; blue, 155 }  ,fill opacity=1 ][line width=2.25]  (359.5,167.75) .. controls (362.75,157.5) and (374.47,156.97) .. (386.34,157.23) .. controls (398.22,157.49) and (411.8,157.85) .. (419.83,157.68) .. controls (427.86,157.51) and (587.57,157.86) .. (591.68,157.91) .. controls (595.8,157.96) and (629.75,157.75) .. (638,168.75) .. controls (646.25,179.75) and (650.5,208.25) .. (621.75,230.5) .. controls (593,252.75) and (456,262.5) .. (422,257.25) .. controls (388,252) and (361.5,241.5) .. (357.25,223.25) .. controls (353,205) and (356.25,178) .. (359.5,167.75) -- cycle ;
				\draw  [draw opacity=0][fill={rgb, 255:red, 155; green, 155; blue, 155 }  ,fill opacity=1 ] (543.99,161.4) .. controls (544,161.68) and (544,161.96) .. (544,162.25) .. controls (544,195.53) and (514,222.5) .. (477,222.5) .. controls (440,222.5) and (410,195.53) .. (410,162.25) -- (477,162.25) -- cycle ;
				\draw  [color={rgb, 255:red, 128; green, 128; blue, 128 }  ,draw opacity=1 ][line width=2.25]  (386.5,152) .. controls (386.5,101.47) and (427.47,60.5) .. (478,60.5) .. controls (528.53,60.5) and (569.5,101.47) .. (569.5,152) .. controls (569.5,202.53) and (528.53,243.5) .. (478,243.5) .. controls (427.47,243.5) and (386.5,202.53) .. (386.5,152) -- cycle ;
				\draw  [dash pattern={on 2.53pt off 3.02pt}][line width=2.25]  (414.75,152) .. controls (414.75,117.07) and (443.07,88.75) .. (478,88.75) .. controls (512.93,88.75) and (541.25,117.07) .. (541.25,152) .. controls (541.25,186.93) and (512.93,215.25) .. (478,215.25) .. controls (443.07,215.25) and (414.75,186.93) .. (414.75,152) -- cycle ;
				
				\draw (162.5,98.9) node [anchor=north west][inner sep=0.75pt]  [font=\LARGE]  {$B$};
				\draw (246,84.9) node [anchor=north west][inner sep=0.75pt]  [font=\LARGE,color={rgb, 255:red, 128; green, 128; blue, 128 }  ,opacity=1 ]  {$B'$};
				\draw (484.5,98.9) node [anchor=north west][inner sep=0.75pt]  [font=\LARGE]  {$B$};
				\draw (568,84.9) node [anchor=north west][inner sep=0.75pt]  [font=\LARGE,color={rgb, 255:red, 128; green, 128; blue, 128 }  ,opacity=1 ]  {$B'$};
				\draw (260,184.4) node [anchor=north west][inner sep=0.75pt]  [font=\LARGE]  {$\Omega $};
				\draw (589,180.9) node [anchor=north west][inner sep=0.75pt]  [font=\LARGE]  {$\Tilde\Omega $};

			\end{tikzpicture}
		}
		
		\caption{The bounded domains $\Omega$ (Lipschitz) and $\Tilde \Omega$ ($C^\infty$ smooth).}\label{fig:omega}
	\end{figure}	 
	
	Now let us choose a bounded domain $\Omega\subset\RR^2$ such that
	$B'\cap \Omega= B'\cap U$ 	and the boundary $\partial\Omega$ is $C^\infty$ outside $B'$, see Fig.~\ref{fig:omega},
	then $\Omega$ is a bounded Lipschitz domain. Furthermore, the set
	$\Tilde \Omega := (\Omega\setminus B') \,\cup\,(\Tilde U\cap B')$
	obtained by replacing the non-smooth portion $B'\cap \Omega$ of $\Omega$ with $B'\cap \Tilde U$ is
	a bounded $C^\infty$ domain, and	$\Omega\setminus \overline{B}=\Tilde \Omega\setminus \overline{B}$.
	
	As each function in $\cD(t^\alpha)$ can be extended by zero to a function in $\cD(r^\alpha_\Omega)$, the min-max principle yields
	\begin{equation}
		\label{eq-lup2}
		E(R_\Omega^\alpha)\le E(T^\alpha) \text{ for any }\alpha\in\RR.
	\end{equation}
	Using the same functions $\chi$ and $\Tilde \chi$ as above,
	for any $u\in \cD(r^\alpha_\Omega)$ we obtain
	\[
	\supp\chi u\subset B'\cap \Omega\equiv B'\cap U,
	\quad
	\supp\Tilde\chi u\subset \Omega\setminus\overline{B}\equiv \Tilde\Omega\setminus\overline{B}\subset\Tilde\Omega,
	\]
	therefore,
	\begin{align*}
		r^\alpha_\Omega(u,u)&=\iint_{\Omega} |\nabla (\chi u)|^2\dd x\dd y+\alpha \int_{\partial \Omega}|\chi u|^2\dd\sigma\\
		&\quad +\iint_{\Omega} |\nabla (\Tilde \chi u)|^2\dd x\dd y+\alpha \int_{\partial \Omega}|\Tilde\chi u|^2\dd\sigma\\
		&\quad- \iint_{\Omega} \big(|\nabla \chi|^2 +|\nabla \Tilde \chi|^2\big)|u|^2\dd x\dd y\\
		&=t^\alpha(\chi u,\chi u)+ r^\alpha_{\Tilde\Omega}(\Tilde\chi u,\Tilde\chi u)
		- \iint_{\Omega} \big(|\nabla \chi|^2 +|\nabla \Tilde \chi|^2\big)|u|^2\dd x\dd y\\
		&\ge t^\alpha(\chi u,\chi u) + r^\alpha_{\Tilde\Omega}(\Tilde\chi u,\Tilde\chi u)
		-C\|u\|^2_{L^2(\Omega)},
	\end{align*}
	and it follows by the min-max principle that
	\begin{equation}
		\label{eq-eee2}
		\begin{aligned}
			E(R_\Omega^\alpha)&\ge E(T^\alpha\oplus R^\alpha_{\Tilde \Omega})-C\equiv \min\big\{E(T^\alpha),E(R^\alpha_{\Tilde \Omega})\big\}-C.
		\end{aligned}
	\end{equation}
	One has $E(R_{\Tilde \Omega}^\alpha)=-\alpha^2+o(\alpha^2)$ for $\alpha\to -\infty$ as discussed in the introduction, and due to \eqref{eq-lll} and \eqref{eq-twoside2} we can increase the above value of $N$ to have
	\[
	E(T^{\gamma_k})\le E(R_{\Tilde\Omega}^{\gamma_k}) \text{ for all $k>N$ and $\gamma_k\in\{\gamma'_k,\gamma''_k\}$.}
	\]
	The substitution into \eqref{eq-lup2} and \eqref{eq-eee2} results in
	\[
	E(R_{\Omega}^{\gamma_k})\le E(T^{\gamma_k})\le E(R_\Omega^{\gamma_k})+C \text{ for all $k>N$ and $\gamma_k\in\{\gamma'_k,\gamma''_k\}$,}
	\]
	and by combining with \eqref{eq-twoside2} we arrive at
	\begin{equation}
		\label{eq-twoside3}
		E(R_U^{\gamma_k})- C
		\le E(R_\Omega^{\gamma_k})\le E(R_U^{\gamma_k})+ C
		\text{ for all $k>N$, $\gamma_k\in\{\gamma'_k,\gamma''_k\}$.}
	\end{equation}
	
	Increase the value of $N$ again to obtain additionally
	\[
	\dfrac{C}{(\gamma'_k)^2}\le \delta \text{ and } \dfrac{C}{(\gamma''_k)^2}\le \delta \text{ for all }k> N.
	\]
	By combining \eqref{eq-twoside1} and \eqref{eq-twoside3} we obtain for all $k>N$:
	\begin{equation*}
		\begin{aligned}
			\dfrac{E(R_\Omega^{\gamma'_k})}{(\gamma'_k)^2}&\in \Big[-\dfrac{1}{\sin^2\theta'}-\dfrac{C}{(\gamma'_k)^2},-\dfrac{1}{\sin^2\theta'}+\delta+\dfrac{C}{(\gamma'_k)^2}\Big]\\
			&\quad\subset
			\Big[-\dfrac{1}{\sin^2\theta'}-\delta,-\dfrac{1}{\sin^2\theta'}+2\delta\Big],\\
			\dfrac{E(R_\Omega^{\gamma''_k})}{(\gamma''_k)^2}&\in \Big[-\dfrac{1}{\sin^2\theta''}-\dfrac{C}{(\gamma''_k)^2},-\dfrac{1}{\sin^2\theta''}+\delta+\dfrac{C}{(\gamma''_k)^2}\Big]\\
			&\quad\subset
			\Big[-\dfrac{1}{\sin^2\theta''}-\delta,-\dfrac{1}{\sin^2\theta''}+2\delta\Big].
		\end{aligned}
	\end{equation*}
	This shows the claim of Theorem \ref{thm1} for the above $\Omega$ with the intervals
	\begin{align*}
		I'&:=\Big[-\dfrac{1}{\sin^2\theta'}-\delta,-\dfrac{1}{\sin^2\theta'}+2\delta\Big],\\
		I''&:=\Big[-\dfrac{1}{\sin^2\theta''}-\delta,-\dfrac{1}{\sin^2\theta''}+2\delta\Big],
	\end{align*}
	and the sequences $\beta'_k:=\gamma'_{N+k}$ and $\beta''_k:=\gamma''_{N+k}$.

	\subsection{Concluding remarks}\label{ssec33}
	
		\begin{figure}[t]
		\centering

		\tikzset{every picture/.style={line width=0.75pt}} 
		
		\scalebox{0.7}{\begin{tikzpicture}[x=0.75pt,y=0.75pt,yscale=-1,xscale=1]
				
				\draw  [fill={rgb, 255:red, 155; green, 155; blue, 155 }  ,fill opacity=1 ][line width=1.5]  (15.67,119.75) -- (587,119.75) -- (587,214.75) -- (15.67,214.75) -- cycle ;
				\draw  [color={rgb, 255:red, 0; green, 0; blue, 0 }  ,draw opacity=1 ][fill={rgb, 255:red, 155; green, 155; blue, 155 }  ,fill opacity=1 ][line width=1.5]  (71.7,119.75) .. controls (74.15,120) and (80.5,120) .. (94.75,120) .. controls (109,120) and (105.25,69.9) .. (111.85,69.9) .. controls (118.45,69.9) and (111.45,119.95) .. (131.5,120.25) .. controls (151.55,120.55) and (151.25,120.75) .. (151.5,120.5) .. controls (151.75,120.25) and (153.25,221.75) .. (152.75,216.5) .. controls (152.25,211.25) and (71.45,212.25) .. (71.7,214.75) .. controls (71.95,217.25) and (69.25,119.5) .. (71.7,119.75) -- cycle ;
				\draw  [color={rgb, 255:red, 0; green, 0; blue, 0 }  ,draw opacity=1 ][fill={rgb, 255:red, 155; green, 155; blue, 155 }  ,fill opacity=1 ][line width=1.5]  (180.5,120.5) .. controls (180.8,121) and (180,120.25) .. (190.75,120.5) .. controls (201.5,120.75) and (213.4,89.5) .. (220,89.5) .. controls (226.6,89.5) and (243.5,120.5) .. (250.75,120.25) .. controls (258,120) and (260.5,120) .. (260.75,120.25) .. controls (261,120.5) and (263.3,222.75) .. (263,217.5) .. controls (262.7,212.25) and (182.8,218.25) .. (181.5,215.25) .. controls (180.2,212.25) and (180.2,120) .. (180.5,120.5) -- cycle ;
				\draw  [color={rgb, 255:red, 0; green, 0; blue, 0 }  ,draw opacity=1 ][fill={rgb, 255:red, 155; green, 155; blue, 155 }  ,fill opacity=1 ][line width=1.5]  (292.04,119.92) .. controls (292.19,120.18) and (296.46,120.18) .. (303.82,120.18) .. controls (311.19,120.18) and (309.38,94.15) .. (312.79,94.15) .. controls (316.2,94.15) and (312.59,120.02) .. (322.95,120.18) .. controls (333.32,120.33) and (333.16,120.44) .. (333.29,120.31) .. controls (333.42,120.18) and (332.75,203.12) .. (332.58,209.17) .. controls (332.41,215.21) and (293.86,211.7) .. (292.25,217.75) .. controls (290.64,223.8) and (291.88,119.66) .. (292.04,119.92) -- cycle ;
				\draw  [color={rgb, 255:red, 0; green, 0; blue, 0 }  ,draw opacity=1 ][fill={rgb, 255:red, 155; green, 155; blue, 155 }  ,fill opacity=1 ][line width=1.5]  (364.29,120.31) .. controls (364.45,120.57) and (364.03,120.18) .. (369.59,120.31) .. controls (375.15,120.44) and (381.3,104.28) .. (384.71,104.28) .. controls (388.12,104.28) and (396.86,120.31) .. (400.61,120.18) .. controls (404.36,120.05) and (405.65,120.05) .. (405.78,120.18) .. controls (405.91,120.31) and (406.25,192.83) .. (405.92,209.17) .. controls (405.58,225.5) and (367.25,208.83) .. (365.25,208.5) .. controls (363.25,208.17) and (364.14,120.05) .. (364.29,120.31) -- cycle ;
				\draw  [color={rgb, 255:red, 0; green, 0; blue, 0 }  ,draw opacity=1 ][fill={rgb, 255:red, 155; green, 155; blue, 155 }  ,fill opacity=1 ][line width=1.5]  (440.3,120.12) .. controls (440.39,120.27) and (442.92,120.27) .. (447.3,120.27) .. controls (451.67,120.27) and (450.6,104.82) .. (452.63,104.82) .. controls (454.65,104.82) and (452.5,120.18) .. (458.66,120.27) .. controls (464.81,120.37) and (464.72,120.43) .. (464.8,120.35) .. controls (464.88,120.27) and (464.92,188.83) .. (464.58,206.5) .. controls (464.25,224.17) and (439.58,212.5) .. (439.92,209.17) .. controls (440.25,205.83) and (440.21,119.97) .. (440.3,120.12) -- cycle ;
				\draw  [color={rgb, 255:red, 0; green, 0; blue, 0 }  ,draw opacity=1 ][fill={rgb, 255:red, 155; green, 155; blue, 155 }  ,fill opacity=1 ][line width=1.5]  (497.8,120.35) .. controls (497.89,120.5) and (497.64,120.27) .. (500.95,120.35) .. controls (504.25,120.43) and (507.9,110.83) .. (509.93,110.83) .. controls (511.95,110.83) and (517.14,120.35) .. (519.37,120.27) .. controls (521.59,120.2) and (522.36,120.2) .. (522.44,120.27) .. controls (522.51,120.35) and (522.04,201.46) .. (522.25,206.5) .. controls (522.46,211.54) and (496.92,206.5) .. (497.58,206.5) .. controls (498.25,206.5) and (497.71,120.2) .. (497.8,120.35) -- cycle ;
				\draw  [color={rgb, 255:red, 0; green, 0; blue, 0 }  ,draw opacity=1 ][fill={rgb, 255:red, 155; green, 155; blue, 155 }  ,fill opacity=1 ][line width=1.5]  (553.44,120.27) .. controls (553.48,120.34) and (554.59,120.34) .. (556.52,120.34) .. controls (558.45,120.34) and (557.97,113.53) .. (558.87,113.53) .. controls (559.76,113.53) and (558.81,120.3) .. (561.53,120.34) .. controls (564.24,120.38) and (564.2,120.41) .. (564.23,120.38) .. controls (564.26,120.34) and (564.26,193.15) .. (563.78,201.55) .. controls (563.29,209.94) and (555.11,206.21) .. (553.78,207.88) .. controls (552.44,209.55) and (553.4,120.21) .. (553.44,120.27) -- cycle ;
				\draw [color={rgb, 255:red, 128; green, 128; blue, 128 }  ,draw opacity=1 ][line width=3]    (71.85,121.25) -- (72.25,218.75) ;
				\draw [color={rgb, 255:red, 128; green, 128; blue, 128 }  ,draw opacity=1 ][line width=3]    (152.35,121.75) -- (152.5,217.25) ;
				\draw [color={rgb, 255:red, 128; green, 128; blue, 128 }  ,draw opacity=1 ][line width=3]    (261.5,121.75) -- (261.25,217.75) ;
				\draw [color={rgb, 255:red, 128; green, 128; blue, 128 }  ,draw opacity=1 ][line width=3]    (333.75,120.5) -- (333.5,210.5) ;
				\draw [color={rgb, 255:red, 128; green, 128; blue, 128 }  ,draw opacity=1 ][line width=3]    (406.5,120.5) -- (406.25,210.5) ;
				\draw [color={rgb, 255:red, 128; green, 128; blue, 128 }  ,draw opacity=1 ][line width=3]    (465.75,120.75) -- (465.5,210.75) ;
				\draw [color={rgb, 255:red, 128; green, 128; blue, 128 }  ,draw opacity=1 ][line width=2.25]    (522,121.5) -- (521.75,211.5) ;
				\draw [color={rgb, 255:red, 128; green, 128; blue, 128 }  ,draw opacity=1 ][line width=2.25]    (564.25,121.25) -- (564,211.25) ;
				\draw [color={rgb, 255:red, 255; green, 255; blue, 255 }  ,draw opacity=1 ][line width=3]    (15.67,116.67) -- (15.97,229.75) ;
				\draw [color={rgb, 255:red, 255; green, 255; blue, 255 }  ,draw opacity=1 ][line width=3]    (629.24,121.58) -- (629.25,211.25) ;
				\draw  [color={rgb, 255:red, 255; green, 255; blue, 255 }  ,draw opacity=1 ][fill={rgb, 255:red, 255; green, 255; blue, 255 }  ,fill opacity=1 ] (17,202.42) -- (631,202.42) -- (631,219) -- (17,219) -- cycle ;
				\draw [color={rgb, 255:red, 128; green, 128; blue, 128 }  ,draw opacity=1 ][line width=3]    (180.5,120.5) -- (181,202.67) ;
				\draw [color={rgb, 255:red, 128; green, 128; blue, 128 }  ,draw opacity=1 ][line width=3]    (292.04,120.92) -- (291.67,201.67) ;
				\draw [color={rgb, 255:red, 128; green, 128; blue, 128 }  ,draw opacity=1 ][line width=3]    (364.29,121.31) -- (364.33,203) ;
				\draw [color={rgb, 255:red, 128; green, 128; blue, 128 }  ,draw opacity=1 ][line width=3]    (440.3,121.12) -- (440.33,202.67) ;
				\draw [color={rgb, 255:red, 128; green, 128; blue, 128 }  ,draw opacity=1 ][line width=2.25]    (553.44,120.27) -- (553.33,202.33) ;
				\draw [color={rgb, 255:red, 128; green, 128; blue, 128 }  ,draw opacity=1 ][line width=2.25]    (498.44,121.27) -- (498,203) ;
				\draw [color={rgb, 255:red, 255; green, 255; blue, 255 }  ,draw opacity=1 ][line width=3]    (588.33,121) -- (588.33,203) ;
				\draw    (156,150.33) -- (177.33,150.33) ;
				\draw [shift={(180.33,150.33)}, rotate = 180] [fill={rgb, 255:red, 0; green, 0; blue, 0 }  ][line width=0.08]  [draw opacity=0] (8.93,-4.29) -- (0,0) -- (8.93,4.29) -- cycle    ;
				\draw [shift={(153,150.33)}, rotate = 0] [fill={rgb, 255:red, 0; green, 0; blue, 0 }  ][line width=0.08]  [draw opacity=0] (8.93,-4.29) -- (0,0) -- (8.93,4.29) -- cycle    ;
				\draw    (266.33,150.33) -- (287.67,150.33) ;
				\draw [shift={(290.67,150.33)}, rotate = 180] [fill={rgb, 255:red, 0; green, 0; blue, 0 }  ][line width=0.08]  [draw opacity=0] (8.93,-4.29) -- (0,0) -- (8.93,4.29) -- cycle    ;
				\draw [shift={(263.33,150.33)}, rotate = 0] [fill={rgb, 255:red, 0; green, 0; blue, 0 }  ][line width=0.08]  [draw opacity=0] (8.93,-4.29) -- (0,0) -- (8.93,4.29) -- cycle    ;
				\draw    (338,149.33) -- (360,149.33) ;
				\draw [shift={(363,149.33)}, rotate = 180] [fill={rgb, 255:red, 0; green, 0; blue, 0 }  ][line width=0.08]  [draw opacity=0] (8.93,-4.29) -- (0,0) -- (8.93,4.29) -- cycle    ;
				\draw [shift={(335,149.33)}, rotate = 0] [fill={rgb, 255:red, 0; green, 0; blue, 0 }  ][line width=0.08]  [draw opacity=0] (8.93,-4.29) -- (0,0) -- (8.93,4.29) -- cycle    ;
				\draw    (412.33,149.7) -- (435,149.97) ;
				\draw [shift={(438,150)}, rotate = 180.67] [fill={rgb, 255:red, 0; green, 0; blue, 0 }  ][line width=0.08]  [draw opacity=0] (8.93,-4.29) -- (0,0) -- (8.93,4.29) -- cycle    ;
				\draw [shift={(409.33,149.67)}, rotate = 0.67] [fill={rgb, 255:red, 0; green, 0; blue, 0 }  ][line width=0.08]  [draw opacity=0] (8.93,-4.29) -- (0,0) -- (8.93,4.29) -- cycle    ;
				
				\draw (37,149.73) node [anchor=north west][inner sep=0.75pt]    {$U_{0}$};
				\draw (156.33,180.73) node [anchor=north west][inner sep=0.75pt]    {$P_{1}$};
				\draw (162,131.4) node [anchor=north west][inner sep=0.75pt]    {$1$};
				\draw (271.67,131.07) node [anchor=north west][inner sep=0.75pt]    {$1$};
				\draw (343.67,130.73) node [anchor=north west][inner sep=0.75pt]    {$1$};
				\draw (418,130.73) node [anchor=north west][inner sep=0.75pt]    {$1$};
				\draw (267.33,179.73) node [anchor=north west][inner sep=0.75pt]    {$P_{2}$};
				\draw (342.33,180.4) node [anchor=north west][inner sep=0.75pt]    {$P_{3}$};
				\draw (416.33,180.4) node [anchor=north west][inner sep=0.75pt]    {$P_{4}$};
				\draw (100,151.73) node [anchor=north west][inner sep=0.75pt]    {$U_{1}$};
				\draw (209,151.73) node [anchor=north west][inner sep=0.75pt]    {$U_{2}$};
				\draw (302,151.73) node [anchor=north west][inner sep=0.75pt]    {$U_{3}$};
				\draw (374,151.73) node [anchor=north west][inner sep=0.75pt]    {$U_{4}$};

			\end{tikzpicture}}

		\caption{A non-compact but smooth ``bad'' domain $V$.}\label{fig:vv}
	\end{figure}
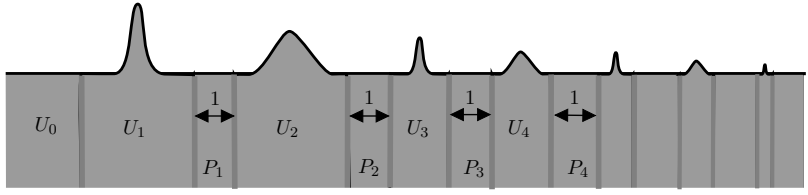
	
				By slightly modifying the arguments in Subsection \ref{ssec31} one can also construct a non-compact but $C^\infty$ smooth domain $V$ such that $E(R_V^\alpha)$ is well-defined for all $\alpha\in\RR$ but the ratio $E(R_V^\alpha)/\alpha^2$ has no limit for $\alpha\to-\infty$. Namely, let us take the same domains $U_n$ as above and put them next to each other in the order $U_0,U_1,P_1,U_2,P_2,U_3,P_3,\dots$, where each $P_n$ is an isometric copy of $(0,1)\times (-\infty,0)$, see Fig.~\ref{fig:vv}. One uses again the Dirichlet-Neumann bracketing along the gluing lines as in Subsection \ref{ssec31} and notes that the decomposition produces Laplacians $P_n$ admitting a separation of variables with spectra in $[-\alpha^2,\infty)$. Then the two-sided estimate \eqref{eq-twoside1} remains valid if one replaces $E(R_U^\alpha)$ with $E(R_V^\alpha)$,
	and the same analysis implies that for any $k\in\NN$ one has the inclusions
	\[
	\begin{aligned}
		\dfrac{E(R_V^{\gamma'_k})}{(\gamma'_k)^2}&\in \Big[-\dfrac{1}{\sin^2\theta'},-\dfrac{1}{\sin^2\theta'}+\delta\Big],\\
		\qquad \dfrac{E(R_V^{\gamma''_k})}{(\gamma''_k)^2}&\in \Big[-\dfrac{1}{\sin^2\theta''},-\dfrac{1}{\sin^2\theta''}+\delta\Big],
	\end{aligned}
	\]
	which shows that $V$ possesses the required properties. As the curvature of $\partial V$ is unbounded,  the non-existence
	of $\lim\limits_{\alpha\to-\infty}E(R_V^\alpha)/\alpha^2$  does not contradict the existing results on the eigenvalues of Robin Laplacians in smooth unbounded domains~\cite{EM,kop1}.
	
	\bigskip
	
\section*{Acknowledgments}
The work started during the conference ``Asymptotic analysis and spectral theory'' (Aspect 2024) that took place in Metz in September 2024. The authors thank the conference organizers J\'er\'emy Faupin, Victor Nistor and Olaf Post for creating the stimulating atmosphere during the meeting.

C.D. expresses her deepest gratitude to Phan Thành Nam and
Laure Saint-Raymond for their continued support. This work was done while C.D. stayed at the Institut des Hautes \'Etudes Scientifiques, and she would like to thank the institute team for the warm hospitality. She acknowledges the support by the European Research Council via ERC CoG RAMBAS, Project No.\ 101044249.

The figures were prepared with the editor \url{https://mathcha.io}.

\end{document}